\def\AA{{\mathbb A}}
\def\NN{{\mathbb N}}
\def\QQ{{\mathbb Q}}
\def\PP{{\mathbb P}}
\def\QQ{{\mathbb Q}}
\def\ZZ{{\mathbb Z}}
\def\0{{\mathbf 0}}
\def\1{{\mathbf 1}}
\def\Abf{{\mathbf A}}
\def\Fcal{{\mathcal F}}
\def\Gcal{{\mathcal G}}
\def\Lcal{{\mathcal L}}
\def\Mcal{{\mathcal M}}
\def\Ocal{{\mathcal O}}
\def\afrak{{\mathfrak a}}
\def\pfrak{{\mathfrak p}}
\def\Rfrak{{\mathfrak R}}
\def\Kbar{{\bar K}}
\def\Aut{\mathrm{Aut}}
\def\disc{\mathrm{disc}}
\def\Gal{\mathrm{Gal}}
\def\Crit{\mathrm{Crit}}
\def\ord{\mathrm{ord}}
\def\min{\mathrm{min}}
\theoremstyle{plain}
\newtheorem*{SzpConj}{Szpiro's Conjecture}
\newtheorem{thm}{Theorem}
\newtheorem{conj}{Conjecture}
\newtheorem{prop}[thm]{Proposition}
\newtheorem{lem}[thm]{Lemma}
\theoremstyle{definition}
\newtheorem*{dfn}{Definition}
\newtheorem*{rem}{Remark}
\newtheorem{ex}{Example}
\begin{document}

\title[Critically Separable Rational Maps in Families]{Critically Separable Rational Maps in Families}

\author{Clayton Petsche}

\address{Clayton Petsche; Department of Mathematics; Oregon State University; Corvallis OR 97331 U.S.A.}

\email{petschec@math.oregonstate.edu}

\thanks{Submitted September 13, 2011.  Revised January 17, 2012.  This research is supported by grant DMS-0901147 of the National Science Foundation.}

\keywords{Arithmetic dynamics, critically separable rational maps, critical discriminant, elliptic curves, Szpiro's conjecture}

\subjclass[2010]{37P15, 37P45, 11G05}

\begin{abstract}
Given a number field $K$, we consider families of critically separable rational maps of degree $d$ over $K$ possessing a certain fixed-point and multiplier structure.  With suitable notions of isomorphism and good reduction between rational maps in these families, we prove a finiteness theorem which is analogous to Shafarevich's theorem for elliptic curves.  We also define the minimal critical discriminant, a global object which can be viewed as a measure of arithmetic complexity of a rational map.  We formulate a conjectural bound on the minimal critical discriminant, which is analogous to Szpiro's conjecture for elliptic curves, and we prove that a special case of our conjecture implies Szpiro's conjecture in the semistable case.
\end{abstract}

\maketitle

%%%%%%%%%%%%%%%%%%%%%%%%%%%%%
%%%%%%%%%%%%%%%%%%%%%%%%%%%%%
%%%%%%%%%%%%%%%%%%%%%%%%%%%%%
%%%%%%%%%%%%%%%%%%%%%%%%%%%%%
%%%%%%%%%%%%%%%%%%%%%%%%%%%%%

\section{Introduction}\label{Background}

Let $K$ be a number field, let $M_K$ denote the set of places of $K$, and let $S$ be a finite subset of $M_K$ containing all of the Archimedean places.  A 1963 theorem of Shafarevich (\cite{silverman:aec} $\S$IX.6) states that there are only finitely many isomorphism classes of elliptic curves over $K$ having good reduction at all places $v\in M_K\setminus S$.  A generalization of this result to abelian varieties was proved by Faltings \cite{MR718935} in 1983, and, in combination with a result of Parshin, led to his proof of the Mordell conjecture.

Motivated by an analogy between elliptic curves and dynamical systems on the projective line, one might expect a similar finiteness result for rational maps $\phi:\PP^1_K\to\PP^1_K$.  The first to consider this problem were Szpiro-Tucker \cite{MR2435841}, who observed that, using the standard notions of isomorphism and good reduction for rational maps, simple counterexamples preclude a naive analogue of Shafarevich's theorem.  For example, rational maps defined by monic integral polynomials have everywhere good reduction, and for each fixed degree $d\geq2$ one can easily find infinite families of pairwise non-isomorphic maps of this type.  We will describe the work of Szpiro-Tucker in more detail below.

In order to describe our approach to this problem, we begin with an example of a family of rational maps which brings the elliptic curve analogy into sharper focus.  Fixing homogeneous coordinates $(x:y)$, we may identify $\PP^1_K$ with $\AA^1_K\cup\{\infty\}$, where $\infty=(1:0)$; this identifies each rational map $\phi:\PP^1_K\to\PP^1_K$ with a rational function $\phi(x)\in K(x)$ in the affine coordinate $x$.  Given a monic cubic polynomial $f(x)=x^3+ax^2+bx+c$, with coefficients in $K$ and with distinct roots in $\Kbar$, define a rational map $\phi_{a,b,c}:\PP_K^1\to\PP_K^1$ by
\begin{equation}\label{LattesExample}
\phi_{a,b,c}(x) = \frac{x^4-2bx^2-8cx+b^2-4ac}{4x^3+4ax^2+4bx+4c}.
\end{equation}
The significance of this rational map lies in its correspondence with the elliptic curve $E/K$ defined by the Weierstrass equation $y^2=x^3+ax^2+bx+c$.  Let $[2]:E\to E$ denote the doubling map $P\mapsto 2P=P+P$, and let $x:E\to\PP^1_K$ denote the $x$-coordinate map.  Then the rational map $\phi_{a,b,c}$, which is called a Latt\`es map, completes (\cite{silverman:aec} $\S$III.2) the commutative diagram
\begin{equation}\label{LattesMap}
\begin{CD}
E  @> [2] >>   E \\ 
@V x VV                                    @VV x V \\ 
\PP_K^1           @> \phi_{a,b,c} >>      \PP_K^1
\end{CD} 
\end{equation}
Denote by $\Lcal(K)$ the family of all such rational maps $\phi_{a,b,c}$ defined over $K$.  Consider the following list of properties of the family $\Lcal(K)$:

\begin{itemize}
	\item[(L1)] Each rational map $\phi_{a,b,c}\in\Lcal(K)$ has degree $4$.  
	\item[(L2)] The point $\infty$ is an unramified fixed point of each rational map $\phi_{a,b,c}\in\Lcal(K)$, with multiplier $4$.  
	\item[(L3)] The numerator of each rational map $\phi_{a,b,c}\in\Lcal(K)$ has vanishing $x^3$ term.  
	\item[(L4)] Each rational map $\phi_{a,b,c}\in\Lcal(K)$ has six distinct critical points in $\PP^1(\Kbar)$, which is the highest number allowed for a rational map of degree $4$ by the Riemann-Hurwitz formula.
\end{itemize}
We will discuss the family $\Lcal(K)$ in more detail in $\S$\ref{GenDynSysSect}.

In this paper, our primary objects of study are certain families of rational maps whose definitions generalize properties (L1)-(L4) of the family $\Lcal(K)$ of Latt\`es maps.  Our main result is a finiteness theorem for isomorphism classes of rational maps, varying in such families, which satisfy a certain strong form of good reduction at all places $v\in M_K\setminus S$.  A special case of our main result implies such a finiteness statement for the family $\Lcal(K)$ of Latt\`es maps; this result is essentially equivalent to Shafarevich's theorem, in the sense that each statement can be easily deduced from the other.

To state our results, we require some notation and some definitions.  Given an integer $d\geq2$ and a nonzero element $\lambda\in K^\times$, consider a rational map $\phi:\PP^1_K\to\PP^1_K$ of degree $d$ such that $\infty$ is a fixed point of $\phi$ with multiplier $\lambda$.  In the affine coordinate $x$, such a rational map can be written uniquely as
\begin{equation}\label{RatMapAffine}
\phi(x) = \frac{x^d+a_{d-1}x^{d-1}+\dots+a_0}{\lambda x^{d-1}+b_{d-2}x^{d-2}+\dots+b_0}
\end{equation}
for coefficients $a_j,b_j\in K$, where the numerator and denominator have no common roots in $\Kbar$.  According to the Riemann-Hurwitz formula, when counted with multiplicity, the rational map $\phi$ has exactly $2d-2$ critical points in $\PP^1(\Kbar)$.  We say that $\phi$ is {\em critically separable} if it has $2d-2$ distinct critical points in $\PP^1(\Kbar)$.  We will see in $\S$\ref{GenDynSysSect} that a generic rational map of the form $(\ref{RatMapAffine})$ has degree $d$ and is critically separable.

\begin{dfn}
Given an integer $d\geq2$ and an element $\lambda\in K^\times$, define $\Fcal_{d,\lambda}(K)$ to be the family of all rational maps $\phi:\PP^1_K\to\PP^1_K$ satisfying
\begin{itemize}
	\item[(F1)] $\deg(\phi)=d$;
	\item[(F2)] $\infty$ is a fixed point of $\phi$ with multiplier $\lambda$;
	\item[(F3)] $a_{d-1}=\epsilon b_{d-2}$, where $\epsilon=(d-\lambda)/(d-1)\lambda$;
	\item[(F4)] $\phi$ is critically separable.
\end{itemize} 
\end{dfn}

The definition of the space $\Fcal_{d,\lambda}(K)$ is partially inspired by the aforementioned properties of the space $\Lcal(K)$ of Latt\`es maps.  In fact, comparison of the four properties (L1)-(L4) of the family $\Lcal(K)$ with the corresponding parts (F1)-(F4) in the definition of $\Fcal_{d,\lambda}(K)$ shows that $\Lcal(K)$ is a (proper) subfamily of $\Fcal_{4,4}(K)$.

Very little is lost in considering only those rational maps fixing $\infty$, for if $\phi:\PP_K^1\to\PP_K^1$ is an arbitrary rational map, then possibly after replacing $K$ with a finite extension of $K$, there exists a point $P\in\PP^1(K)$ such that $\phi(P)=P$.  Replacing $\phi$ with $\sigma\circ\phi\circ\sigma^{-1}$ for a suitably chosen $\sigma\in\Aut(\PP^1_K)$, we may assume without loss of generality that $P=\infty$.

\begin{dfn}  Let $\Aut^\infty(\PP^1_K) = \{ x\mapsto \alpha x+\beta \mid \alpha\in K^\times, \beta\in K\}$.  We say that two rational maps $\phi,\psi\in\Fcal_{d,\lambda}(K)$ are {\em isomorphic} (over $K$) if there exists $\sigma\in\Aut^\infty(\PP^1_K)$ such that $\sigma\circ\phi\circ\sigma^{-1}=\psi$.  
\end{dfn}

Note that $\Aut^\infty(\PP^1_K)$ is precisely the subgroup of $\Aut(\PP^1_K)$ consisting of those automorphisms which fix $\infty$, and so in view of condition (F2), conjugation by the group $\Aut^\infty(\PP^1_K)$ is a natural notion of isomorphism between rational maps in $\Fcal_{d,\lambda}(K)$.  It is not hard to see that each of the conditions (F1)-(F4) is invariant under $\Aut^\infty(\PP^1_K)$-conjugation, and thus the family $\Fcal_{d,\lambda}(K)$ is closed under isomorphism.

It is instructive at this point to revisit the analogy with elliptic curves.  Recall that an elliptic curve over $K$ is defined to be a pair $(X,O)$, where $X$ is a complete nonsingular curve of genus one over $K$, and where $O$ is a $K$-rational point on $X$ which acts as origin for the group law on $X(K)$.  An isomorphism between two elliptic curves $(X_1,O_1)$ and $(X_2,O_2)$ is an isomorphism $X_1\to X_2$ of curves with $O_1\mapsto O_2$.   Thus, the difference between an $\Aut(\PP^1_K)$-conjugation class of rational maps and an isomorphism class of rational maps in the family $\Fcal_{d,\lambda}(K)$ is analogous to the difference between an isomorphism class of curves of genus one over $K$ and an isomorphism class of elliptic curves over $K$.  It is also worth mentioning, in view of our main result, Theorem~\ref{FinitenessTheoremIntro}, that Shafarevich's Theorem would be false in general if ``elliptic curve'' were replaced by ``curve of genus one''; see Mazur \cite{MR828821} p. 241.

Condition (F3) in the definition of the family $\Fcal_{d,\lambda}(K)$ is a natural generalization of the observation (L3) concerning the family $\Lcal(K)$ of Latt\`es maps.  Given a rational map $\phi\in\Fcal_{d,\lambda}(K)$, written as in $(\ref{RatMapAffine})$, let us call $\phi$ {\em centered} if both $a_{d-1}=0$ and $b_{d-2}=0$.  It is not hard to see that every isomorphism class in $\Fcal_{d,\lambda}(K)$ contains a rational map $\phi$ with $b_{d-2}=0$ (this observation is analogous to the fact that every elliptic curve $E/K$ has a Weierstrass equation of the form $y^2=x^3+bx+c$), and condition (F3) ensures that such a rational map in $\Fcal_{d,\lambda}(K)$ satisfies $a_{d-1}=0$ as well; that is, such a rational map is centered.  The choice of $\epsilon=(d-\lambda)/(d-1)\lambda$ ensures that the condition (F3) is invariant under $\Aut^\infty(\PP^1_K)$-conjugation; this follows from a simple calculation of the effect of $\Aut^\infty(\PP^1_K)$-conjugation on the coefficients $a_{d-1}$ and $b_{d-2}$.  Thus $\Fcal_{d,\lambda}(K)$ could be described as the smallest family of critically separable rational maps for which $\infty$ is a fixed point of multiplier $\lambda$, which contains all of the centered rational maps, and which is closed under $\Aut^\infty(\PP^1_K)$-conjugation.

To further emphasize the necessity of conditions (F2) and (F3) in the definition of the family $\Fcal_{d,\lambda}(K)$, we remark that the primary theme of our main result, Theorem~\ref{FinitenessTheoremIntro}, is the recovery of information about a rational map from knowledge of its critical locus.  Any such result must respect the fact that if $\phi:\PP^1_K\to\PP^1_K$ is a rational map and $\sigma\in \Aut(\PP^1_K)$ is an automorphism, then $\phi$ and $\sigma\circ\phi$ share the same critical locus.  Together, conditions (F2) and (F3) ensure that $\phi$ and $\sigma\circ\phi$ cannot both belong to $\Fcal_{d,\lambda}(K)$ unless $\sigma$ is the trivial automorphism; this fact forms the technical heart of Lemma~\ref{RigidityLemma}.  Simple counterexamples show that Theorem~\ref{FinitenessTheoremIntro} would be false if one of the  conditions (F2) or (F3) were omitted.  

On the other hand, it is possible to modify conditions (F2) and (F3) to produce other potentially interesting families of critically separable rational maps for which a version of our main finiteness result can be proved, using essentially the same argument.  To illustrate this point, we will give an example of such a family at the end of $\S$\ref{FinitenessThmSect}.

Before we can state our main result we must define what we mean by ``good reduction'' of a rational map in the family $\Fcal_{d,\lambda}(K)$.  For each non-Archimedean place $v$ of $K$, let $\Ocal_v$ denote the ring of $v$-integral elements of $K$, let $\Mcal_v$ denote the maximal ideal of $\Ocal_v$, and let $k_v=\Ocal_v/\Mcal_v$ denote the residue field.  We say $\phi\in\Fcal_{d,\lambda}(K)$ is {\em $v$-integral} if, when written as in $(\ref{RatMapAffine})$, the coefficients $a_j, b_j,\lambda$ are elements of $\Ocal_v$.  In this case, reducing the coefficients modulo $\Mcal_v$ we may meaningfully define a reduced rational map $\tilde{\phi}_v:\PP^1_{k_v}\to\PP^1_{k_v}$.

\begin{dfn}  Let $v$ be a non-Archimedean place of $K$.  A rational map $\phi\in\Fcal_{d,\lambda}(K)$ has {\em critically separable good reduction} at $v$ if it is $K$-isomorphic to a $v$-integral rational map $\psi\in\Fcal_{d,\lambda}(K)$ such that the reduced rational map $\tilde{\psi}_v:\PP^1_{ k_v}\to\PP^1_{ k_v}$ has degree $d$ and is critically separable.
\end{dfn}

Note that all rational maps in $\Fcal_{d,\lambda}(K)$ automatically have critically separable bad reduction at all places $v$ for which $\lambda\not\in\Ocal_v$.  We are now ready to state our main result.

\begin{thm}\label{FinitenessTheoremIntro}
Let $S$ be a finite set of places of the number field $K$ including all of the Archimedean places, let $d\geq2$ be an integer, and let $\lambda\in K^\times$.  Then the family $\Fcal_{d,\lambda}(K)$ contains only finitely many $K$-isomorphism classes of rational maps having critically separable good reduction at all places $v\not\in S$.
\end{thm}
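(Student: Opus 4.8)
The plan is to reconstruct an isomorphism class in $\Fcal_{d,\lambda}(K)$ from its critical locus, and to pin down the critical locus using the classical finiteness theory of $S$-unit equations. Throughout, I would use freely that enlarging $S$ only weakens the hypothesis, so that one may assume $\Ocal_S$ is a principal ideal domain and that $2$, $d$, and $d-1$ lie in $\Ocal_S^\times$; one may also assume $\lambda\in\Ocal_S^\times$, since otherwise the remark preceding the theorem shows that no map in $\Fcal_{d,\lambda}(K)$ has critically separable good reduction outside $S$, and there is nothing to prove. Translating the affine coordinate by $-b_{d-2}/((d-1)\lambda)\in K$ makes $b_{d-2}=0$, and then condition (F3) forces $a_{d-1}=0$; thus every isomorphism class has a \emph{centered} representative, and a short calculation with the action of $\Aut^\infty(\PP^1_K)$ on the coefficient $b_{d-2}$ shows that two centered representatives of the same class differ by a scaling $x\mapsto\alpha x$ with $\alpha\in K^\times$.

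The next step is to manufacture, from the local good-reduction data, a single global integral model. Fix an isomorphism class with critically separable good reduction outside $S$ and a centered representative $\phi_0$. For each $v\notin S$ choose a $v$-integral $\psi^{(v)}\in\Fcal_{d,\lambda}(K)$ whose reduction has degree $d$ and is critically separable; re-centering $\psi^{(v)}$ preserves both of these properties (the re-centering translation is $v$-integral because $(d-1)\lambda\in\Ocal_v^\times$), and comparing the re-centered map with $\phi_0$ produces a scaling factor $\gamma_v\in K^\times$ which is a $v$-unit for all but finitely many $v$. Since $\Ocal_S$ is a PID one can choose $\gamma\in K^\times$ with $v(\gamma)=v(\gamma_v)$ for every $v\notin S$; then $\psi(x)=\gamma^{-1}\phi_0(\gamma x)$ is $S$-integral and has critically separable good reduction at \emph{every} $v\notin S$. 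Writing $\psi=N/D$ with $N,D\in\Ocal_S[x]$, the critical form $W=N'D-ND'$ then has degree $2d-2$, leading coefficient $\lambda$, and vanishing coefficient in degree $2d-3$ (the latter from the centering); and since forming $W$ commutes with reduction and the reduced map is critically separable of degree $d$ at every $v\notin S$, the discriminant $\disc(W)$ is a $v$-unit there, i.e.\ $\disc(W)\in\Ocal_S^\times$.

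The heart of the proof is then to show that such forms $W$ fall into only finitely many classes modulo the residual scaling $x\mapsto\alpha x$. This may be viewed as a special case of the Birch--Merriman theorem, that binary forms of degree $2d-2$ over $\Ocal_S$ with unit discriminant comprise only finitely many $\GL_2(\Ocal_S)$-orbits; concretely, the $2d-2$ critical points are distinct algebraic numbers, integral outside $S$, with sum $0$ and with all pairwise differences being $S$-units. Their splitting field has degree at most $(2d-2)!$ over $K$ and is unramified outside $S$ (because $\disc(W)$ is an $S$-unit), so by the Hermite--Minkowski theorem all such splitting fields are contained in a single number field $L_0$; finiteness of the solutions of $S$-unit equations over $L_0$ then shows that the multiset of critical points, up to a common scaling by an $S$-unit of $L_0$, lies in a finite set, and the finiteness of $\Ocal_S^\times/(\Ocal_S^\times)^{m}$ for any fixed $m$ (Dirichlet's unit theorem) absorbs the discrepancy between scaling over $L_0$ and scaling over $K$. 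Hence the critical form $W$, modulo $\Aut^\infty(\PP^1_K)$-scaling, takes only finitely many values.

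Finally, the Rigidity Lemma (Lemma~\ref{RigidityLemma})---whose point is precisely that conditions (F2) and (F3) forbid two distinct maps of $\Fcal_{d,\lambda}(K)$ from differing by post-composition with a nontrivial automorphism---lets one recover the map from its critical form up to finitely many possibilities, and combined with the previous step this yields the theorem. The main obstacle is the third step: the good-reduction hypothesis supplies only the unit-discriminant condition, which by itself controls neither the height of $W$ nor its $\GL_2(\Ocal_S)$-orbit, so one is forced to invoke the full strength of $S$-unit equation finiteness; and the fact that the individual critical points lie in varying extensions of $K$, rather than in $K$ itself, is what makes the appeal to Hermite--Minkowski and to finite generation of $S$-unit groups necessary. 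The local-to-global construction of the second step, by contrast, is routine, though it uses the principality of $\Ocal_S$ essentially.
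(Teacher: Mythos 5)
Your argument is correct and shares the paper's skeleton: produce a single $S$-integral model whose critical form has unit discriminant, show such forms fall into finitely many classes under the residual $\Aut^\infty$-action via the $S$-unit equation, and conclude with Lemma~\ref{RigidityLemma}. Your third step is essentially the paper's Theorem~\ref{BirchMerrimanTheoremAffine}; your sketch of the descent from the splitting field $L_0$ back to $K$ (via roots of unity and finiteness of $\Ocal_S^\times/(\Ocal_S^\times)^m$) is the least detailed part, and the paper handles this more cleanly with a Galois-equivariance argument, but the idea is sound. Where you genuinely diverge is the local-to-global step. The paper's Lemma~\ref{GlobalMinimalModel} glues the local conjugating automorphisms into a global one by invoking Borel's adelic double-coset theorem for the affine group (Lemma~\ref{ClassNumberLemma}); you instead exploit condition (F3): after inverting $(d-1)\lambda$, every class has a centered representative, unique up to the torus $x\mapsto\alpha x$, so the only local ambiguity is a scalar $\gamma_v$, and these are glued by enlarging $S$ until $\Ocal_S$ is a principal ideal domain. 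This is the classical ``global minimal Weierstrass model over a PID'' trick; it is more elementary and more explicit than the adelic argument, and as a bonus the centering kills the subleading coefficient of the Wronskian, reducing the residual equivalence on critical forms to scaling alone. The price is that it leans on the specific shape of $\Fcal_{d,\lambda}(K)$, whereas Borel's theorem applies verbatim to the variant families described at the end of \S\ref{FinitenessThmSect}. One small imprecision: the remark before the theorem only rules out good reduction at places with $\lambda\notin\Ocal_v$, not those with $\lambda\in\Mcal_v$, so your reduction to $\lambda\in\Ocal_S^\times$ should be justified instead by simply adjoining to $S$ the finitely many places where $\lambda$ is a non-unit, exactly as the paper does.
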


The proof of Theorem~\ref{FinitenessTheoremIntro} relies ultimately on Diophantine approximation, namely the standard result on the finiteness of $S$-integral solutions to the unit equation $x+y=1$ (see \cite{bombierigubler} $\S$5.1).  This should not be surprising to those familiar with any of the usual proofs of Shafarevich's theorem (see for example \cite{silverman:aec} $\S$IX.6), which rely on the closely related finiteness result of Siegel for integral points on curves of genus at least one.  The second major ingredient in our proof of Theorem~\ref{FinitenessTheoremIntro} is a classical finiteness theorem (see \cite{MR1093002}) for rational maps with a prescribed critical locus; we will describe this result in more detail in the proof of Lemma~\ref{RigidityLemma}.

We will see in $\S$\ref{GenDynSysSect} that a rational map $\phi:\PP^1_K\to\PP^1_K$ written as in $(\ref{RatMapAffine})$ has degree $d$ and is critically separable if and only if its {\em critical discriminant}, a certain polynomial expression in the coefficients $a_j$ and $b_j$, is nonvanishing.  Consequently, the notion of critically separable good reduction can be detected by the critical discriminant of a rational map, in much the same way that the discriminant of a Weierstrass equation detects good reduction of an elliptic curve.  Taking the analogy a step further, in $\S$\ref{MinCritDisc} we will define the {\em minimal critical discriminant} of $\phi$, an integral ideal of $\Ocal_K$ which is supported on the places at which $\phi$ has critically separable bad reduction, and which can be viewed as one measure of the arithmetic complexity of $\phi$.  By analogy with Szpiro's conjecture for the minimal discriminant of an elliptic curve, in $\S$\ref{MinCritDisc} we will propose a conjectural bound on the size of the minimal critical discriminant of $\phi$ in terms of the set of places at which $\phi$ has critically separable bad reduction.  We will show in Theorem~\ref{ImpliesSzpiro} that our conjecture for the family $\Fcal_{4,4}(K)$ implies Szpiro's conjecture for semistable elliptic curves. 

This research was inspired in part by the paper \cite{MR2435841} of Szpiro-Tucker, who were the first to prove an analogue of Shafarevich's theorem for rational maps.  Our Theorem~\ref{FinitenessTheoremIntro} is similar in spirit to their main result, and we borrow several key ideas from their paper, notably the use of the critical locus to define a notion of good reduction, and the use of the $S$-unit equation via results such as \cite{MR0306119} and our Theorem~\ref{BirchMerrimanTheoremAffine}.  However, the formulations of our Theorem~\ref{FinitenessTheoremIntro} and the main result of \cite{MR2435841} are sufficiently different that neither theorem is stronger than the other.  It is a strength of \cite{MR2435841} that its main finiteness result holds over all rational maps of degree $d$ possessing at least three critical points, while our Theorem~\ref{FinitenessTheoremIntro} only gives a finiteness result along each family $\Fcal_{d,\lambda}(K)$ of critically separable rational maps.  On the other hand, within this more modest framework our result has the following two advantages.  First, in \cite{MR2435841}, isomorphism between rational maps is defined by the equivalence $\phi\sim\psi$ whenever $\phi=\sigma\circ\psi\circ\tau$ for $\sigma,\tau\in\Aut(\PP_K^1)$; in other words, their definition uses independent pre-composition and post-composition actions of the automorphism group of $\PP^1_K$.  In contrast, our notion of isomorphism for the family $\Fcal_{d,\lambda}(K)$, defined by the conjugation equivalence $\phi\sim\psi$ whenever $\phi=\sigma\circ\psi\circ\sigma^{-1}$ for automorphisms $\sigma\in\Aut^\infty(\PP_K^1)$, is a more natural choice in the context of dynamics because it is better behaved under iteration.  Second, in \cite{MR2435841}, the notion of {\em critically good reduction} of a rational map is neither stronger nor weaker than standard good reduction, and it relies on behavior of both the critical locus and the branch locus.  Our notion of {\em critically separable good reduction} is strictly stronger than standard good reduction, and it relies only on behavior of the critical locus.  Moreover, our notion of critically separable good reduction is detected by the critical discriminant, which leads to the minimal critical discriminant and in turn to Conjecture~\ref{CondDiscConjecture}, an analogue of Szpiro's conjecture for critically separable rational maps.

Silverman \cite{MR2316407} and Szpiro-Tepper-Williams \cite{arxiv1010.5030} have considered the {\em minimal resultant} associated to a rational map $\phi:\PP_K^1\to\PP_K^1$.  This is an integral ideal of $\Ocal_K$ which is supported on the places at which $\phi$ has bad reduction in the standard sense, and like our minimal critical discriminant, it can be viewed as an analogue for rational maps of the minimal discriminant of an elliptic curve.  Szpiro-Tepper-Williams \cite{arxiv1010.5030} have given counterexamples to show that the minimal resultant is not bounded solely in terms of the set of places at which $\phi$ has bad reduction; on the other hand, they have proposed a conjecture stating that it can be bounded in terms of the set of places at which $\phi$ has {\em critically bad reduction} in the sense of Szpiro-Tucker \cite{MR2435841}.

The plan of this paper is the following:  In $\S$\ref{GenDynSysSect} we will define the key technical tool of the paper, the critical discriminant, and discuss its properties.  In $\S$\ref{FinitenessThmSect} we will prove a number of preliminary number-theoretic results, and we will give the proof of Theorem~\ref{FinitenessTheoremIntro}.  Finally, in $\S$\ref{MinCritDisc} we will define the minimal critical discriminant of a rational map in the family $\Fcal_{d,\lambda}(K)$, state Conjecture~\ref{CondDiscConjecture}, and discuss its relationship to Szpiro's conjecture.

We would like to acknowledge Aaron Levin for bringing the aforementioned passage in \cite{MR828821} to our attention, and the anonymous referee for his or her many excellent suggestions.

\section{The Critical Discriminant}\label{GenDynSysSect}

For this section only, $K$ denotes an arbitrary field (not necessarily a number field).  We begin by reviewing a few basic facts about discriminants of polynomials; for details see \cite{bombierigubler} $\S$B.1.  Given a polynomial $P(x) \in K[x]$ of degree $N$, the discriminant $\disc(P)$ is an integer polynomial in the coefficients of $P(x)$ which can be defined as the determinant of a certain Sylvester matrix.  Alternatively, factoring $P(x)=a\prod_n(x-r_n)$ for $a\in K^\times$, $r_n\in \Kbar$, the discriminant is given by
\begin{equation}\label{DiscDef}
\disc(P)=a^{2N-2}\prod_{m<n}(r_m-r_n)^2.
\end{equation}
It is evident from $(\ref{DiscDef})$ that $\disc(P)\neq0$ if and only if $P(x)$ has $N$ distinct roots, and that 
\begin{equation}\label{DiscScale}
\disc(\lambda P)=\lambda^{2N-2}\disc(P)
\end{equation}
for all $\lambda\in K^\times$.  Given an automorphism $\sigma\in\Aut^\infty(\PP^1_K)$, written as $\sigma(x)=\alpha x+\beta$ for $\alpha\in K^\times$ and $\beta\in K$, it follows from an elementary calculation using $(\ref{DiscDef})$ that 
\begin{equation}\label{FormTrans}
\disc(P_\sigma)=\alpha^{N(N-1)}\disc(P),
\end{equation}
where $P_\sigma(x)=P(\sigma(x))=P(\alpha x+\beta)$.

Let $d\geq2$ be an integer, and let $\lambda\in K^\times$.  We say an ordered pair $(A(x),B(x))$ of polynomials in $K[x]$ is in {\em standard form} with respect to the pair $(d,\lambda)$ if
\begin{equation*}
\begin{split}
A(x) & = x^d+a_{d-1}x^{d-1}+\dots+a_0 \\
B(x) & = \lambda x^{d-1}+b_{d-2}x^{d-2}+\dots+b_0
\end{split}
\end{equation*}
for coefficients $a_j,b_j\in K$; in other words, $A(x)$ must have degree $d$ and be monic, and $B(x)$ must have degree $d-1$ and leading coefficient $\lambda$.  Given such a pair, the rational map $\phi:\PP^1_K\to\PP^1_K$ defined by $\phi(x)=A(x)/B(x)$ has degree at most $d$, with $\deg(\phi)=d$ if and only if $A(x)$ and $B(x)$ have no common roots in $\Kbar$.  Moreover, $\infty$ is a fixed point of $\phi$ with multiplier $\lambda$.  

Conversely, an arbitrary rational map $\phi:\PP^1_K\to\PP^1_K$ of degree $d$ for which $\infty$ is a fixed point with multiplier $\lambda$ can be written (uniquely) in the affine coordinate $x$ as $\phi(x)=A(x)/B(x)$ for a pair $(A(x),B(x))$ of polynomials in standard form.

Define the {\em Wronskian} of the pair $(A(x),B(x))$ to be the polynomial
\begin{equation}\label{FormE}
W_{A,B}(x) = B(x)A'(x) - A(x)B'(x);
\end{equation}
thus the derivative of $A(x)/B(x)$ is $W_{A,B}(x)/B(x)^2$.  Observe that $W_{A,B}(x)=\lambda x^{2d-2}+\dots$, and thus $\deg(W_{A,B})=2d-2$.  Define the {\em critical discriminant} of the pair $(A(x),B(x))$ by
\begin{equation}\label{CritDisc}
\Delta_{A,B}=\disc(W_{A,B}).
\end{equation}

The significance and basic properties of the Wronskian $W_{A,B}(x)$ and the critical discriminant $\Delta_{A,B}$ are explained in the following proposition.  The most important property is part (c), which states that the critical discriminant $\Delta_{A,B}$ is nonvanishing if and only if the corresponding rational map $\phi(x)=A(x)/B(x)$ has degree $d$ and is critically separable.

\begin{prop}\label{GenDiscProp}
Let $d\geq2$ be an integer, let $\lambda\in K^\times$, and let $(A(x),B(x))$ be a pair of polynomials in standard form with coefficients in $K$.  Denote by $\phi:\PP^1_K\to\PP^1_K$ the rational map defined by $\phi(x)=A(x)/B(x)$.
\begin{enumerate}
\item[(a)]  If $r\in\Kbar$ is a common root of $A(x)$ and $B(x)$, then $r$ is at least a double root of $W_{A,B}(x)$.

\item[(b)]  If $\deg(\phi)=d$ and $r\in\Kbar$, then $W_{A,B}(r)=0$ if and only if $r$ is a critical point of $\phi$. 

\item[(c)]  $\Delta_{A,B}\neq0$ if and only if $\deg(\phi)=d$ and $\phi$ has $2d-2$ distinct critical points in $\Kbar$.

\item[(d)]  Given $\sigma\in\Aut^\infty(\PP^1_K)$, written as $\sigma(x)=\alpha x+\beta$ for $\alpha\in K^\times$ and $\beta\in K$, the rational map $\sigma\circ\phi\circ\sigma^{-1}:\PP_K^1\to\PP_K^1$ is given by $\sigma\circ\phi\circ\sigma^{-1}(x)=A^\sigma(x)/B^\sigma(x)$ for polynomials
\begin{equation}\label{ABSigma}
\begin{split}
A^\sigma(x) & = \alpha^d A(\alpha^{-1}(x-\beta))+\alpha^{d-1}\beta B(\alpha^{-1}(x-\beta)) \\
B^\sigma(x) & = \alpha^{d-1}B(\alpha^{-1}(x-\beta))
\end{split}
\end{equation}
in standard form, and
\begin{equation}\label{DeltaChangeSigma}
\Delta_{A^\sigma,B^\sigma} = \alpha^{(2d-2)(2d-3)}\Delta_{A,B}.
\end{equation}
\end{enumerate}
\end{prop}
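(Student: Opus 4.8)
The plan is to prove each of the four parts in order, with parts (a)--(c) following from direct analysis of the Wronskian $W_{A,B}(x)$ together with the factorization formula $(\ref{DiscDef})$ for the discriminant, and part (d) reducing to an explicit change-of-coordinates computation combined with the scaling identities $(\ref{DiscScale})$ and $(\ref{FormTrans})$.

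For part (a), suppose $r\in\Kbar$ is a common root of $A(x)$ and $B(x)$. Then $A(x)=(x-r)\tilde A(x)$ and $B(x)=(x-r)\tilde B(x)$ for polynomials $\tilde A,\tilde B\in\Kbar[x]$. Substituting into $(\ref{FormE})$ and differentiating by the product rule, every term of $W_{A,B}(x)=B A' - A B'$ will carry a factor of $(x-r)$, and in fact a short expansion shows the $(x-r)^1$ terms cancel, leaving $(x-r)^2$ dividing $W_{A,B}(x)$; so $r$ is at least a double root. For part (b), assume $\deg(\phi)=d$, so $A(x)$ and $B(x)$ have no common root. Since $\phi'(x)=W_{A,B}(x)/B(x)^2$ (as noted after $(\ref{FormE})$), a finite point $r$ with $B(r)\neq 0$ is a critical point of $\phi$ exactly when $\phi'(r)=0$, i.e. when $W_{A,B}(r)=0$. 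The only subtlety is a point $r$ with $B(r)=0$: there $\phi(r)=\infty$, and one checks directly from $(\ref{FormE})$ that $W_{A,B}(r)=B(r)A'(r)-A(r)B'(r)=-A(r)B'(r)$, which is nonzero precisely when $r$ is a simple root of $B$ (using $A(r)\neq 0$), which is precisely the condition that $\phi$ be unramified at $r$ over $\infty$; so again $W_{A,B}(r)=0$ iff $r$ is critical. One must also confirm that $\infty$ itself contributes no critical point when $\deg(\phi)=d$, which follows since $\deg A=d>\deg B=d-1$ forces $\phi$ to be unramified at $\infty$ (the multiplier there is $\lambda\neq 0$, or equivalently $\phi$ looks like $x/\lambda+\cdots$ near $\infty$). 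Part (c) is then immediate: $\Delta_{A,B}=\disc(W_{A,B})\neq 0$ iff $W_{A,B}$ has $2d-2$ distinct roots in $\Kbar$ (since $\deg W_{A,B}=2d-2$ and its leading coefficient is $\lambda\neq 0$, formula $(\ref{DiscDef})$ applies); by (a) this forces $A,B$ to have no common root, hence $\deg(\phi)=d$, and then by (b) these $2d-2$ distinct roots are exactly the $2d-2$ distinct critical points of $\phi$. Conversely if $\deg(\phi)=d$ and $\phi$ has $2d-2$ distinct critical points, (b) identifies them with the roots of $W_{A,B}$, which are therefore $2d-2$ distinct points, so $\Delta_{A,B}\neq 0$.

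For part (d), I would first verify the formulas $(\ref{ABSigma})$ for $A^\sigma, B^\sigma$ by direct substitution: writing $\sigma^{-1}(x)=\alpha^{-1}(x-\beta)$, one computes $\sigma\circ\phi\circ\sigma^{-1}(x)=\alpha\,\phi(\alpha^{-1}(x-\beta))+\beta = \alpha\frac{A(\alpha^{-1}(x-\beta))}{B(\alpha^{-1}(x-\beta))}+\beta$, and clearing denominators gives numerator $\alpha A(\alpha^{-1}(x-\beta)) + \beta B(\alpha^{-1}(x-\beta))$ over denominator $B(\alpha^{-1}(x-\beta))$; multiplying top and bottom by $\alpha^{d-1}$ to normalize the denominator's leading coefficient to $\lambda$ (note $B(\alpha^{-1}(x-\beta))$ has leading coefficient $\lambda\alpha^{-(d-1)}$) yields exactly $(\ref{ABSigma})$, and one checks the resulting $A^\sigma$ is monic of degree $d$. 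The cleanest route to $(\ref{DeltaChangeSigma})$ is to relate $W_{A^\sigma,B^\sigma}$ to $W_{A,B}$: since the Wronskian is a ``covariant'' of the pair under composition, a direct computation from $(\ref{FormE})$ using the chain rule gives $W_{A^\sigma,B^\sigma}(x)=\alpha^{2d-3}\,W_{A,B}(\alpha^{-1}(x-\beta))$ --- that is, $W_{A^\sigma,B^\sigma} = \alpha^{2d-3}\,(W_{A,B})_{\sigma^{-1}}$ in the notation preceding $(\ref{FormTrans})$. Then applying $(\ref{DiscScale})$ with $N=2d-2$ and $\lambda$ replaced by $\alpha^{2d-3}$ contributes a factor $\alpha^{(2d-3)(2(2d-2)-2)}=\alpha^{(2d-3)(4d-6)}=\alpha^{2(2d-3)^2}$, and applying $(\ref{FormTrans})$ to $(W_{A,B})_{\sigma^{-1}}$ with $N=2d-2$ and scaling factor $\alpha^{-1}$ contributes $\alpha^{-(2d-2)(2d-3)}$. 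Multiplying, the total exponent of $\alpha$ is $2(2d-3)^2-(2d-2)(2d-3)=(2d-3)\big(2(2d-3)-(2d-2)\big)=(2d-3)(2d-4)=(2d-2)(2d-3)\cdot\frac{2d-4}{2d-2}$; this needs to be reconciled to give $(2d-2)(2d-3)$, so I expect the precise bookkeeping of the exponents --- getting the Wronskian scaling factor $\alpha^{2d-3}$ exactly right and tracking how $(\ref{DiscScale})$ versus $(\ref{FormTrans})$ each act --- to be the one genuinely delicate point, and I would double-check it against the $d=2$ case where $W_{A,B}$ is quadratic and everything is explicit.

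The main obstacle throughout is thus not conceptual but the care required in part (d)'s exponent arithmetic, plus the boundary-case analysis in (b) at points lying over $\infty$; the heart of the matter --- that $W_{A,B}$ vanishes exactly on the critical locus and that its discriminant therefore detects critical separability --- is a clean consequence of $\phi'=W_{A,B}/B^2$ and the definition $(\ref{DiscDef})$.
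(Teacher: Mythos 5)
Parts (a)--(c) of your argument are correct and essentially identical to the paper's; your handling of the case $B(r)=0$ in (b) via $W_{A,B}(r)=-A(r)B'(r)$ is a slightly more direct (and perfectly valid) alternative to the paper's device of post-composing with the involution $x\mapsto 1/x$ and reducing to the first case.

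The gap is in part (d), and you correctly sensed that something was off: your Wronskian covariance factor is wrong. Writing $u=\alpha^{-1}(x-\beta)$ and differentiating $(\ref{ABSigma})$ gives $(B^\sigma)'(x)=\alpha^{d-2}B'(u)$ and $(A^\sigma)'(x)=\alpha^{d-1}A'(u)+\alpha^{d-2}\beta B'(u)$; substituting into $(\ref{FormE})$, the two terms involving $\beta B(u)B'(u)$ cancel and each surviving term carries a total factor $\alpha^{d-1}\cdot\alpha^{d-1}=\alpha^{d}\cdot\alpha^{d-2}=\alpha^{2d-2}$, so
\begin{equation*}
W_{A^\sigma,B^\sigma}(x)=\alpha^{2d-2}\,W_{A,B}(\alpha^{-1}(x-\beta)),
\end{equation*}
not $\alpha^{2d-3}W_{A,B}(\alpha^{-1}(x-\beta))$. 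A quick sanity check confirms this: since $(A^\sigma,B^\sigma)$ is again in standard form, $W_{A^\sigma,B^\sigma}$ must have leading coefficient $\lambda$, while $W_{A,B}(\alpha^{-1}(x-\beta))$ has leading coefficient $\lambda\alpha^{-(2d-2)}$, forcing the factor $\alpha^{2d-2}$. (Equivalently, $(\sigma\circ\phi\circ\sigma^{-1})'(x)=\phi'(u)$ by the chain rule, and $(B^\sigma)^2=\alpha^{2d-2}B(u)^2$.) With the correct factor, $(\ref{DiscScale})$ with $N=2d-2$ contributes $\alpha^{(2d-2)(2(2d-2)-2)}=\alpha^{2(2d-2)(2d-3)}$, and $(\ref{FormTrans})$ applied with scaling $\alpha^{-1}$ contributes $\alpha^{-(2d-2)(2d-3)}$, giving total exponent $2(2d-2)(2d-3)-(2d-2)(2d-3)=(2d-2)(2d-3)$, exactly $(\ref{DeltaChangeSigma})$. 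Your exponent $(2d-3)(2d-4)$ is an artifact of the incorrect factor $\alpha^{2d-3}$; the discrepancy you flagged is not a bookkeeping subtlety to be reconciled after the fact but a sign that the covariance computation itself had to be redone.
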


\begin{proof}
(a)  If $A(x)=(x-r)A_0(x)$ and $B(x)=(x-r)B_0(x)$, then an elementary calculation shows that 
\begin{equation*}
W_{A,B}(x) = (x-r)^2(B_0(x)A_0'(x)-A_0(x)B_0'(x)).
\end{equation*}

(b)  Assume that $\deg(\phi)=d$ (thus $A(x)$ and $B(x)$ have no common roots in $\Kbar$), and let $r\in\Kbar$.  Case 1: $B(r)\neq0$.  In this case a standard calculation shows that 
\begin{equation*}
\phi(x)-\phi(r)=\frac{W_{A,B}(r)}{B(r)^2}(x-r) + (x-r)^2\psi(x)
\end{equation*}
for a rational map $\psi:\PP_K^1\to\PP_K^1$ with $\psi(r)\neq\infty$; it follows that $r$ is a critical point of $\phi$ if and only if $W_{A,B}(r)=0$, completing the proof in case 1.  

Case 2: $B(r)=0$.  In this case $A(r)\neq0$, and we consider the rational map $\phi_0:\PP_K^1\to\PP_K^1$ defined by $\phi_0(x)=B(x)/A(x)$.  Since $\phi_0=\sigma\circ\phi$ for the involution $\sigma\in\Aut(\PP_K^1)$ defined by $\sigma(x)=1/x$, it follows that $r$ is a critical point of $\phi$ if and only if it is a critical point of $\phi_0$.  By case 1, $r$ is a critical point of $\phi_0$ if and only if $W_{B,A}(r)=0$, and since $W_{A,B}(x)=-W_{B,A}(x)$, we have $W_{B,A}(r)=0$ if and only if $W_{A,B}(r)=0$, completing the proof of case 2.

(c)  If $\Delta_{A,B}\neq0$, then $W_{A,B}(x)$ has $2d-2$ distinct roots in $\Kbar$, which implies that $A(x)$ and $B(x)$ have no common roots in $\Kbar$ by part (a), whereby $\deg(\phi)=d$.  Part (b) implies that $\phi$ has $2d-2$ distinct critical points.  Conversely, if $\deg(\phi)=d$ and $\phi$ has $2d-2$ distinct critical points, then part (b) implies that $W_{A,B}(x)$ has $2d-2$ distinct roots in $\Kbar$, whereby $\Delta_{A,B}\neq0$. 

(d)  The calculation of the polynomials $A^\sigma(x)$ and $B^\sigma(x)$ is elementary.  It is easy to see that $W_{A^\sigma,B^\sigma}(x) = \alpha^{2d-2}W_{A,B}(\alpha^{-1}(x-\beta))$, and combining this fact with the properties $(\ref{DiscScale})$ and $(\ref{FormTrans})$ of discriminants, one arrives at the identity $(\ref{DeltaChangeSigma})$.  
\end{proof} 

\begin{ex}\label{QuadEx}
Let $\phi(x)=A(x)/B(x)$ for $A(x)=x^2+ax+b$ and $B(x)=\lambda x+c$.   Then $W_{A,B}(x)=\lambda x^2 + 2cx+(ac-\lambda b)$, which has discriminant
\begin{equation*}
\Delta_{A,B}=4c^2-4\lambda(ac-\lambda b).
\end{equation*}
\end{ex}

\begin{ex}
Returning to the family $\Lcal(K)$ of Latt\`es maps described in $\S$\ref{Background}, let $f(x)=x^3+ax^2+bx+c$ be a monic cubic polynomial, with coefficients in $K$ and with distinct roots in $\Kbar$, and let $\phi_{a,b,c}:\PP_K^1\to\PP_K^1$ be the Latt\`es map $(\ref{LattesExample})$ associated to the elliptic curve $E$ defined by $y^2=f(x)$.  Thus $\phi_{a,b,c}(x)=A(x)/B(x)$, where $A(x)=x^4-2bx^2-8cx+b^2-4ac$ and $B(x)=4x^3+4ax^2+4bx+4c$.  

We now elaborate briefly on properties (L1)-(L4) of the family $\Lcal(K)$, as listed in $\S$\ref{Background}.  Property (L1) follows at once from the diagram $(\ref{LattesMap})$ and the fact that $\deg(x)=2$ and $\deg([2])=4$.  Properties (L2) and (L3) are self-evident.  To see property (L4), observe that the map $x:E\to\PP^1_K$ is an even double cover, ramified only at the four $2$-torsion points of $E$, and the map $[2]:E\to E$ is unramified.  These facts and inspection of the diagram $(\ref{LattesMap})$ show that the critical locus of $\phi_{a,b,c}$ is precisely $x(E[4]\setminus E[2])$, where $E[n]$ denotes the set of $n$-torsion points in $E(\Kbar)$.  The set $E[4]\setminus E[2]$ consists of twelve points occuring in six pairs $\pm P_1, \dots,\pm P_6$, and the critical locus of $\phi$ consists of the six distinct points $x(P_1),\dots,x(P_6)$.

Not surprisingly, the critical discriminant $\Delta_{A,B}$ is closely related to the discriminant $\Delta_E$ of the Weierstrass equation $y^2=f(x)$.  Recall (\cite{silverman:aec} $\S$III.1) that the latter is given by 
\begin{equation}\label{EllipticDisc}
\Delta_E=2^4\disc(f),
\end{equation}
where
\begin{equation*}
\disc(f) = a^2b^2+18abc-4a^3c-4b^3-27c^2 
\end{equation*}
is the discriminant of the cubic polynomial $f(x)$.  We will see that
\begin{equation}\label{CritDiscLattes}
\Delta_{A,B} = -2^{38}\disc(f)^5. 
\end{equation}

One could simply blast out both sides of $(\ref{CritDiscLattes})$ and check that they are equal.  But the following more conceptual argument is perhaps more illuminating, and it reduces the calculation to a simpler special case.  Viewing $a, b, c$ as variables, $\Delta_{A,B}$ and $\disc(f)$ are elements of the polynomial ring $\ZZ[a,b,c]$ which vanish on precisely the same set of $(a,b,c)$ in $\Kbar^3$.  For if $\disc(f)\neq0$, then the discussion in $\S$\ref{Background} shows that the map $\phi_{a,b,c}$ has degree $4$ and is critically separable, and so $\Delta_{A,B}\neq0$ follows via Proposition~\ref{GenDiscProp} (c).  Conversely, if $\disc(f)=0$, then $f(x)$ has a double root in $\Kbar$, say $r$.  Then plainly $B(r)=4f(r)=0$, and the easily checked identity $A(x)=f'(x)^2-(8x+4a)f(x)$ shows that $A(r)=0$ as well.  This means that $\deg(\phi_{a,b,c})<4$, and consequently $\Delta_{A,B}=0$ using Proposition~\ref{GenDiscProp} (c).  

Since the elements $\Delta_{A,B}$ and $\disc(f)$ of $\ZZ[a,b,c]$ vanish simultaneously, and since the latter is irreducible, it follows that $\Delta_{A,B} = q\cdot\disc(f)^{n}$ for some $q\in \QQ^\times$ and some integer $n\geq1$.  Given $\alpha\in K^\times$, consider the monic polynomial $f^*(x)=\alpha^3f(\alpha^{-1}x)$, and let $A^*(x)$ and $B^*(x)$ be the numerator and denominator of the Latt\`es map corresponding as above to the elliptic curve $y^2=f^*(x)$.  Calculations show that $\disc(f^*)=\alpha^6\disc(f)$ and $\Delta_{A^*,B^*}=\alpha^{30}\Delta_{A,B}$, and since $\Delta_{A^*,B^*} = q\cdot\disc(f^*)^{n}$, we must have $n=5$.  To calculate $q$, consider the case $a=0$, $b=1$, $c=0$; thus $f(x)=x^3+x$ and $\disc(f)=-4$.  In this case $W_{A,B}(x)=4x^6-20x^4-20x^2-4$, which has discriminant $\Delta_{A,B}=2^{48}$.  It follows that $q=-2^{38}$.
\end{ex}

\section{The Finiteness Theorem}\label{FinitenessThmSect}

For the remainder of this paper $K$ denotes a number field.  Let $M_K$, $M_K^\infty$, and $M_K^0$ denote the set of all places, all Archimedean places, and all non-Archimedean places of the number field $K$, respectively.  Given a subring $R$ of an extension field of $K$, define 
\begin{equation*}
\Aut^\infty(\PP^1_R) = \{ x\mapsto \alpha x+\beta \mid \alpha \in R^\times, \beta\in R\}.
\end{equation*}

\begin{lem}\label{ClassNumberLemma}
Given a number field $K$, there exists a finite subset $S_0$ of $M_K$ containing $M_K^\infty$ with the following property.  If $S$ is a finite subset of $M_K$ containing $S_0$, and if $\sigma_v\in \Aut^\infty(\PP^1_K)$ for each $v\in M_K\setminus S$, such that $\sigma_v\in \Aut^\infty(\PP^1_{\Ocal_v})$ for all except finitely many places $v$, then there exists some $\sigma\in \Aut^\infty(\PP^1_K)$ such that $\sigma\sigma_v^{-1}\in \Aut^\infty(\PP^1_{\Ocal_v})$ for all $v\in M_K\setminus S$.
\end{lem}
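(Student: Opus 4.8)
The plan is to split the problem into a ``multiplicative'' part and an ``additive'' part, and to absorb the only essential obstruction --- the ideal class group of $K$ --- into the choice of $S_0$. Write each automorphism as $\sigma_v(x)=\alpha_v x+\beta_v$ with $\alpha_v\in K^\times$ and $\beta_v\in K$, and seek $\sigma$ of the form $\sigma(x)=\alpha x+\beta$. A direct computation gives
\begin{equation*}
(\sigma\sigma_v^{-1})(x)=\frac{\alpha}{\alpha_v}\,x+\Bigl(\beta-\frac{\alpha\beta_v}{\alpha_v}\Bigr),
\end{equation*}
so the requirement $\sigma\sigma_v^{-1}\in\Aut^\infty(\PP^1_{\Ocal_v})$ is equivalent to the two conditions $\ord_v(\alpha)=\ord_v(\alpha_v)$ and $\beta-\alpha\beta_v/\alpha_v\in\Ocal_v$. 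Note also that the hypothesis ``$\sigma_v\in\Aut^\infty(\PP^1_{\Ocal_v})$ for all but finitely many $v$'' says exactly that $\alpha_v\in\Ocal_v^\times$ and $\beta_v\in\Ocal_v$ for all but finitely many $v\in M_K\setminus S$.

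For the choice of $S_0$: since $\Cl(\Ocal_K)$ is finite, fix finitely many non-Archimedean places whose underlying primes generate $\Cl(\Ocal_K)$, and let $S_0$ be the union of these places with $M_K^\infty$. Then for any finite $S\supseteq S_0$, the primes of $\Ocal_K$ lying in $S$ still generate $\Cl(\Ocal_K)$, so the ring of $S$-integers $\Ocal_{K,S}$ has trivial class group and is a principal ideal domain. The multiplicative problem is now immediate: the product $\afrak=\prod_{v\in M_K^0\setminus S}\pfrak_v^{\ord_v(\alpha_v)}$ involves only finitely many nontrivial factors, hence is a genuine fractional ideal of $\Ocal_{K,S}$; choosing a generator, $\afrak=\alpha\Ocal_{K,S}$ for some $\alpha\in K^\times$, and by construction $\ord_v(\alpha)=\ord_v(\alpha_v)$ for every $v\in M_K\setminus S$.

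With $\alpha$ fixed, the additive problem is elementary. Set $\delta_v=\alpha\beta_v/\alpha_v\in K$; since $\alpha/\alpha_v$ is a $v$-unit and $\beta_v\in\Ocal_v$ for all but finitely many $v$, the set $T=\{v\in M_K\setminus S:\delta_v\notin\Ocal_v\}$ is finite. One then produces $\beta\in K$ with $\beta-\delta_v\in\Ocal_v$ for all $v\in M_K\setminus S$ by partial fractions over the principal ideal domain $\Ocal_{K,S}$: for each $v\in T$ write $\delta_v=\delta_v^{(v)}+\delta_v'$ with $\delta_v'\in\Ocal_v$ and with $\delta_v^{(v)}$ having, among places outside $S$, its only pole at $v$, and set $\beta=\sum_{v\in T}\delta_v^{(v)}$. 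Then $\beta-\delta_v\in\Ocal_v$ at each $v\in T$ because the remaining summands $\delta_w^{(w)}$ ($w\neq v$) are $v$-integral, while at $v\notin S\cup T$ both $\beta$ and $\delta_v$ are $v$-integral. (Equivalently, one may invoke the standard identification $K/\Ocal_{K,S}\cong\bigoplus_{v\in M_K^0\setminus S}K_v/\Ocal_v$.) Thus $\sigma(x)=\alpha x+\beta$ meets both conditions at every $v\in M_K\setminus S$, which completes the proof.

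I expect the multiplicative step to be the only real obstacle: there is no reason for $\afrak$ to be principal over $\Ocal_K$ itself, and it is precisely to kill this class-group obstruction that one enlarges the place set to $S_0$. Everything else --- the formula for $\sigma\sigma_v^{-1}$, the finiteness of $T$, and the partial-fraction patching --- is routine, and the conclusion is clearly insensitive to further enlargement of $S$ beyond $S_0$, since that only trivializes more ideal classes and shrinks $T$.
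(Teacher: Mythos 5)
Your proof is correct, but it takes a genuinely different route from the paper. The paper treats $G=\Aut^\infty(\PP^1)$ as a black box: it invokes Borel's theorem that $G(\Abf_K)$ is a finite union of double cosets $G^\infty(\Abf_K)\cdot\sigma_n\cdot G(K)$, takes $S_0$ large enough that the finitely many representatives $\sigma_n$ are integral outside $S_0$, and reads off $\sigma$ from the double coset containing the adele $(\sigma_v)$. You instead exploit the explicit structure $\Aut^\infty(\PP^1)\cong\GG_m\ltimes\GG_a$: the $\GG_m$-part is exactly the statement that a fractional ideal of $\Ocal_{K,S}$ is principal, which you arrange by putting generators of $\Cl(\Ocal_K)$ into $S_0$, and the $\GG_a$-part is the partial-fraction decomposition $K/\Ocal_{K,S}\cong\bigoplus_{v\notin S}K_v/\Ocal_v$, which needs no finiteness hypothesis at all. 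Your quantifier structure matches the lemma ($S_0$ is chosen once, independent of $S$ and of the $\sigma_v$), the identity for $\sigma\sigma_v^{-1}$ is right, and fixing $\alpha$ before solving for $\beta$ is legitimate since the additive condition only involves $\alpha$ through the $v$-units $\alpha/\alpha_v$. What each approach buys: the paper's argument is shorter modulo the citation and applies verbatim to any affine algebraic group, whereas yours is self-contained, elementary, and makes transparent that for this particular solvable group the only obstruction absorbed into $S_0$ is the ideal class group --- in effect you have reproved the special case of Borel's theorem that the paper actually uses.
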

\begin{proof}
For each place $v\in M_K$, denote by $\widehat{K}_v$ the completion of $K$ at $v$, and if $v$ is non-Archimedean let $\widehat{\Ocal}_v$ denote the ring of $v$-integral elements of $\widehat{K}_v$.  

Let $G(K)$ denote the affine algebraic group $\Aut^\infty(\PP^1_K)$, and let $G(\Abf_K)$ be the adele group associated to $G(K)$.  Thus $G(\Abf_K)$ is the subgroup of the direct product of the groups $G(\widehat{K}_v)$, indexed over all places $v\in M_K$, where an element $(\sigma_v)$ of this product is in $G(\Abf_K)$ if and only if $\sigma_v\in G(\widehat{\Ocal}_v)$ for all except finitely many $v\in M_K$.  Recall that $G(K)$ is naturally identified with the subgroup of principal adeles in $G(\Abf_K)$.  Denote by $G^\infty(\Abf_K)$ the subgroup of $G(\Abf_K)$ consisting of those $(\sigma_v)\in G(\Abf_K)$ with $\sigma_v\in G(\widehat{\Ocal}_v)$ for all $v\in M_K^0$.

A theorem of Borel (\cite{MR0202718}, Thm 5.1) states that $G(\Abf_K)$ is equal to a finite union 
\begin{equation}\label{AdeleFinite}
G(\Abf_K)=\bigcup_{1\leq n\leq N}(G^\infty(\Abf_K)\cdot \sigma_n\cdot G(K))
\end{equation}
of double cosets by the two subgroups $G^\infty(\Abf_K)$ and $G(K)$, for some choice of representatives $\sigma_1,\dots,\sigma_N\in G(\Abf_K)$.  For each $1\leq n\leq N$, write $\sigma_n=(\sigma_{n,v})$, and let $S_0$ be a finite subset of places of $K$ containing $M_K^\infty$ such that $\sigma_{n,v}\in G(\widehat{\Ocal}_v)$ for all $1\leq n\leq N$ and all places $v\in M_K\setminus S_0$; such a finite set $S_0$ exists by the finiteness of the set $\{\sigma_1,\dots,\sigma_N\}$ and the definition of $G(\Abf_K)$ as a restricted direct product.  

Consider a finite subset $S$ of $M_K$ such that $S_0\subseteq S$.  For each $v\in M_K\setminus S$, let $\sigma_v$ be an element of $G(K)$, such that $\sigma_v\in G(\Ocal_v)$ for all except finitely many places $v$.  Arbitrarily selecting $\sigma_v\in G(\widehat{K}_v)$ for each $v\in S$ produces an adele $(\sigma_v)\in G(\Abf_K)$, and $(\ref{AdeleFinite})$ implies that $(\sigma_v)=(\delta_v) \cdot \sigma_n \cdot \sigma$ for some $(\delta_v)\in G^\infty(\Abf_K)$, some $1\leq n\leq N$, and some principal adele $\sigma\in G(K)$.  If $v\in M_K\setminus S$, then $v\notin S_0$, so  $\sigma\sigma_v^{-1}=\sigma_{n,v}^{-1}\delta_v^{-1} \in G(\widehat{\Ocal}_v)$, as desired.
\end{proof}

\begin{rem}
The result of Borel used in Lemma~\ref{ClassNumberLemma} holds more generally for arbitrary affine algebraic groups $G$, and can be viewed as an analogue for such groups of the finiteness of the class number of $K$.
\end{rem}

Let $S$ be a finite subset of $M_K$ containing $M_K^\infty$.  We say that two monic polynomials $F(x),G(x)\in\Ocal_S[x]$ of degree $N$ are {\em $\Ocal_S$-equivalent} if $F(x)= \alpha^{-N}G(\alpha x+\beta)$ for some $\alpha\in\Ocal_S^\times$ and $\beta\in\Ocal_S$.  The following is an affine variant of a finiteness result for binary forms due to Birch-Merriman \cite{MR0306119} and Evertse-Gy{\H{o}}ry \cite{MR1117339}.  To keep this paper as self contained as possible, we give a proof of the result using a fairly straightforward modification of the proof given in \cite{MR0306119}.  K. Gy{\H{o}}ry has pointed out to us that it can also be deduced in a few lines from Theorem 8 of \cite{MR727397}.

\begin{thm}\label{BirchMerrimanTheoremAffine}
Let $K$ be a number field, let $S$ be a finite subset of $M_K$ containing $M_K^\infty$, and let $N\geq2$ be an integer.  Then there exist only finitely many $\Ocal_S$-equivalence classes of monic polynomials $F(x)\in\Ocal_S[x]$ of degree $N$ with $\disc(F)\in\Ocal_S^\times$.
\end{thm}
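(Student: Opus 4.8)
The plan is to mimic the classical Birch–Merriman argument, tracking how the $S$-integral structure forces finiteness via the $S$-unit equation. First I would reduce to controlling the roots: write $F(x) = \prod_{n=1}^{N}(x - r_n)$ with $r_n$ in a fixed finite extension $L$ of $K$, and enlarge $S$ to a finite set $S'$ of places of $L$ containing all Archimedean places, all places above $S$, and all places above the (finitely many) rational primes dividing $N$. The roots $r_n$ are integral over $\Ocal_S$, hence lie in $\Ocal_{S'}$, and the condition $\disc(F) = \prod_{m<n}(r_m - r_n)^2 \in \Ocal_S^\times$ forces every difference $r_m - r_n$ to be an $S'$-unit. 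The key observation is then that for any three distinct indices $i,j,k$ one has the Siegel-type identity
\begin{equation*}
\frac{r_i - r_k}{r_i - r_j} + \frac{r_k - r_j}{r_i - r_j} = 1,
\end{equation*}
and both summands are $S'$-units; by the finiteness of solutions of the $S$-unit equation $u + v = 1$, each such ratio takes only finitely many values.

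Next I would use these finitely many ratios to pin down the tuple $(r_1,\dots,r_N)$ up to an affine change of variables. Fix two of the roots, say $r_1$ and $r_2$: by $\Ocal_S$-equivalence we may translate so that $r_1 = 0$ and scale by $\alpha = r_2 - r_1$ so that $r_2 = 1$; but we must be careful, since this $\alpha$ need not lie in $\Ocal_S^\times$, only in $\Ocal_{S'}^\times$. The point is that after this normalization every remaining root $r_n = (r_n - r_1)/(r_2 - r_1)$ is a ratio of the type controlled above, so there are only finitely many possibilities for the normalized polynomial $F^*(x) = \alpha^{-N} F(\alpha x + \beta)$ with coefficients now in $L$. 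The remaining task is to descend from $L$-equivalence back to $\Ocal_S$-equivalence over $K$: two polynomials in $\Ocal_S[x]$ that are $\Ocal_{S'}$-equivalent (as polynomials over $L$) need not be $\Ocal_S$-equivalent, but the discrepancy is measured by a scaling factor $\alpha$ which is an $S'$-unit whose $N$-th power relates two elements of $K$. Here is where enlarging $S$ to include places above primes dividing $N$, together with a finiteness argument for the relevant unit/class-group data over $K$ (of the flavor of Lemma~\ref{ClassNumberLemma}), reduces the infinitely many a priori choices of $\alpha$ to finitely many $\Ocal_S^\times$-orbits.

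I expect the main obstacle to be precisely this last descent step, rather than the $S$-unit input, which is standard. The subtlety is that $\Ocal_S$-equivalence uses only units $\alpha \in \Ocal_S^\times$ and translations $\beta \in \Ocal_S$, whereas the natural normalization of the root configuration lives over the splitting field and uses $\alpha \in \Ocal_{S'}^\times$; one must show that the extra freedom gained by passing to $L$ and $S'$ accounts for only finitely many $K$-rational equivalence classes. Following \cite{MR0306119}, this is handled by observing that once the multiset of roots is determined up to $L$-affine equivalence, the monic polynomial $F$ over $\Ocal_S$ determines its roots up to the action of $\Gal(L/K)$, and the scaling ambiguity $\alpha$ satisfies $\alpha^{N} \cdot (\text{something in } K) \in K^\times$ with $\alpha$ an $S'$-unit; the finiteness of $\Ocal_{S'}^\times / (\Ocal_{S'}^\times)^N$ together with a bounded class-number contribution then yields finitely many choices. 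Alternatively, as K. Győry observed, once the $S$-unit structure of the root differences is in hand the result follows quickly from Theorem 8 of \cite{MR727397}; I would present the self-contained route but remark on this shortcut.
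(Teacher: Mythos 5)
Your first step---passing to a splitting field, observing that the root differences are units, normalizing two roots to $0$ and $1$, and invoking the finiteness of solutions of the unit equation---is essentially identical to the paper's argument (which takes $T$ to be the set of places of $L$ above $S$ and concludes that the polynomials fall into finitely many $\Ocal_T$-equivalence classes). One point you should not gloss over: the existence of a \emph{single} finite extension $L$ splitting all of the polynomials under consideration is itself a finiteness statement of Hermite--Minkowski type (the paper cites \cite{bombierigubler}, Cor.~B.2.15); it does not come for free from writing down one $F$.

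The descent step is where you diverge from the paper and where your sketch has a genuine gap. You reduce the discrepancy between $\Ocal_{S'}$-equivalence over $L$ and $\Ocal_S$-equivalence over $K$ to the scaling factor $\alpha$ alone, but the translation is equally problematic: if $F_1(x)=\alpha^{-N}F_2(\alpha x+\beta)$ with $F_1,F_2\in\Ocal_S[x]$ and $\alpha\in\Ocal_S^{\times}$, the parameter $\beta$ a priori lies only in $\Ocal_{S'}$, and without $\beta\in\Ocal_S$ you do not get $\Ocal_S$-equivalence. This is repairable---comparing the $x^{N-1}$ coefficients gives $N\beta\in K$, hence $\beta\in K\cap\Ocal_{S'}=\Ocal_S$ once $\alpha\in K$---but you never address it. The relation ``$\alpha^{N}\cdot(\text{something in }K)\in K^{\times}$'' is also left unspecified; the clean statement is $\alpha^{N(N-1)}=\disc(F_2)/\disc(F_1)\in\Ocal_S^{\times}$, so the admissible $\alpha$ form a finitely generated subgroup of $\Ocal_{S'}^{\times}$ whose quotient by $\Ocal_S^{\times}$ has exponent dividing $N(N-1)$ and is therefore finite; no class-number input is needed. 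The paper handles both parameters at once by a different, Galois-theoretic device: to each normalizing map $\sigma_{\alpha,\beta}$ carrying the roots of $F$ to those of a fixed $F_0$ it attaches the tuple of bijections $\sigma_{\alpha,\beta}\circ\tau_m\circ\sigma_{\alpha,\beta}^{-1}$ of $Z(F_0)$ as $\tau_m$ runs over $\Gal(L/K)$; there are finitely many such tuples, and equality of two of them forces the comparison map $x\mapsto\alpha x+\beta$ to commute with every $\tau_m$ on at least two points, hence to have both coefficients in $K$ and therefore in $\Ocal_S$. Either route can be completed, but as written your descent controls neither $\beta$ nor, precisely, $\alpha$.
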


\begin{proof}
Let $\Pi$ be the set of all monic polynomials $F(x)\in\Ocal_S[x]$ of degree $N$ with $\disc(F)\in\Ocal_S^\times$, and let $L$ be the splitting field of the set $\Pi$ over $K$.  Then $L/K$ is a finite extension; see for example \cite{bombierigubler} Cor. B.2.15.  Letting $T$ be the set of places of $L$ lying over the places of $K$ in $S$, we will first show that $\Pi$ is the union of finitely many $\Ocal_T$-equivalence classes.  Consider an arbitrary element $F(x)\in\Pi$, and let $e_1,\dots,e_N\in\Ocal_T$ denote the roots of $F(x)$; they are $T$-integral by Gauss's lemma.  Note also that $e_i-e_j\in\Ocal_T^\times$ whenever $i\neq j$, since $\disc(F)\in\Ocal_T^\times$.  The polynomial $F^*(x)=(e_2-e_1)^{-N}f((e_2-e_1)x+e_1)$ in $\Ocal_T[x]$ is monic and satisfies $f(0)=f(1)=0$, and thus
\begin{equation}\label{PolySpecialForm}
F^*(x) = x(x-1)(x-e^*_3)\dots(x-e^*_N)
\end{equation} 
for some $e^*_3\dots e^*_N\in\Ocal_T$.  In particular,
\begin{equation*}
\disc(F^*) = (e^*_3)^2\dots (e^*_N)^2(1-e^*_3)^2\dots (1-e^*_N)^2\prod_{3\leq i<j\leq N}(e^*_i-e^*_j)^2.
\end{equation*} 
Since $\disc(F^*)\in\Ocal_T^\times$, it follows that each pair $(e_j,1-e_j)$ is a solution in $(\Ocal_T^\times)^2$ to the unit equation $x+y=1$.  Since the are only finitely many such solutions (\cite{bombierigubler} $\S$5.1), there are only finitely many possibilities for $F^*(x)$, and since each $F(x)\in\Pi$ is $\Ocal_T$-equivalent to such a $F^*(x)$, we conclude that there are only finitely many $\Ocal_T$-equivalence classes of polynomials in $\Pi$.

To complete the proof, we have to show that each $\Ocal_T$-equivalence class in $\Pi$ is the union of finitely many $\Ocal_S$-equivalence classes.  Let $\Pi_0$ be an $\Ocal_T$-equivalence class in $\Pi$, and fix some $F_0(x)\in\Pi_0$; thus each $F(x)\in\Pi_0$ is equal to $\alpha^{-N}F_0(\alpha x+\beta)$ for some $\alpha\in\Ocal_T^\times$ and $\beta\in\Ocal_T$.  Denoting by $Z(F)$ and $Z(F_0)$ the set of roots of $F(x)$ and $F_0(x)$, respectively, we have a bijection $\sigma_{\alpha,\beta}:Z(F)\to Z(F_0)$ given by $\sigma_{\alpha,\beta}(x)=\alpha x+\beta$.  Enumerating $\Gal(L/K)=\{\tau_1,\dots,\tau_M\}$, each $\tau_m$ permutes the set $Z(F)$, and we obtain a bijection $i_{\alpha,\beta}:Z(F_0)^M\to Z(F_0)^M$ defined by
\begin{equation*}
i_{\alpha,\beta}(r_1,\dots,r_M)=(\sigma_{\alpha,\beta}\circ\tau_1\circ\sigma_{\alpha,\beta}^{-1}(r_1),\dots,\sigma_{\alpha,\beta}\circ\tau_M\circ\sigma_{\alpha,\beta}^{-1}(r_M)).
\end{equation*} 

Consider two polynomials in $\Pi_0$, say $F_1(x)=\alpha_1^{-N}F_0(\alpha_1 x+\beta_1)$ and $F_2(x)=\alpha_2^{-N}F_0(\alpha_2 x+\beta_2)$ for $\alpha_1,\alpha_2\in\Ocal_T^\times$ and $\beta_1,\beta_2\in\Ocal_T$.  We will show that, if $i_{\alpha_1,\beta_1}=i_{\alpha_2,\beta_2}$ as bijections $Z(F_0)^M\to Z(F_0)^M$, then $F_1(x)$ is $\Ocal_S$-equivalent to $F_2(x)$.  Since there are only finitely many bijections $Z(F_0)^M\to Z(F_0)^M$, it will follow that there are only finitely many $\Ocal_S$-equivalence classes in $\Pi_0$, completing the proof of the theorem.

Indeed, if $i_{\alpha_1,\beta_1}=i_{\alpha_2,\beta_2}$, then we let $\alpha=\alpha_1/\alpha_2$, and we let $\beta=(\beta_1-\beta_2)/\alpha_2$.  Then $\alpha\in\Ocal_T^\times$, $\beta\in\Ocal_T$, and $F_1(x)=\alpha^{-N}F_2(\alpha x+\beta)$.  Since $T$ is the set of places of $L$ lying over those places of $K$ in $S$, in order to show that $\alpha\in\Ocal_S^\times$ and $\beta\in\Ocal_S$ we just have to verify that $\alpha$ and $\beta$ are elements of $K$.  Fixing $\tau_m\in\Gal(L/K)$, the assumption that $i_{\alpha_1,\beta_1}=i_{\alpha_2,\beta_2}$ implies that
\begin{equation}\label{IdentityForEachm}
\sigma_{\alpha_1,\beta_1}\circ\tau_m\circ\sigma_{\alpha_1,\beta_1}^{-1}(r)=\sigma_{\alpha_2,\beta_2}\circ\tau_m\circ\sigma_{\alpha_2,\beta_2}^{-1}(r)
\end{equation} 
for each $r\in Z(F_0)$.  Since $\sigma_{\alpha_1,\beta_1}=\sigma_{\alpha_2,\beta_2}\circ\sigma_{\alpha,\beta}$, we deduce from $(\ref{IdentityForEachm})$ that 
\begin{equation*}
\sigma_{\alpha,\beta}\circ\tau_m(r)=\tau_m\circ \sigma_{\alpha,\beta}(r)
\end{equation*} 
for each $r\in Z(F_1)$.  This means that the two linear polynomials $\alpha x+\beta$ and $\tau_m(\alpha)x+\tau_m(\beta)$ take the same value for at least two distinct choices of $x$, namely the roots $r\in Z(F_1)$ of $F_1(x)$, from which we deduce that $\alpha x+\beta=\tau_m(\alpha)x+\tau_m(\beta)$, and therefore $\tau_m(\alpha)=\alpha$ and $\tau_m(\beta)=\beta$.  As $\tau_m\in\Gal(L/K)$ was arbitrary, we conclude that $\alpha,\beta\in K$, as desired. 
\end{proof}

\begin{prop}\label{GoodReductionProp}
Let $d\geq2$ be an integer, let $\lambda\in K^\times$, and let $\phi\in\Fcal_{d,\lambda}(K)$.
\begin{enumerate}
\item[(a)] Let $v\in M_K^0$ be a non-Archimedean place such that $\lambda\in\Ocal_v$.  Then $\phi$ has critically separable good reduction at $v$ if and only if $\phi$ is isomorphic to a rational map $\psi\in\Fcal_{d,\lambda}(K)$ given by $\psi(x)=A(x)/B(x)$, for a pair $(A(x),B(x))$ of polynomials in standard form with coefficients in $\Ocal_v$ and with $\Delta_{A,B}\in\Ocal_v^\times$.
\item[(b)] $\phi$ has critically separable good reduction at all except finitely many places $v\in M_K^0$.
\end{enumerate}
\end{prop}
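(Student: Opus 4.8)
The plan is to reduce both parts to a single local statement $(\star)$: for $v\in M_K^0$ with $\lambda\in\Ocal_v$ and a $v$-integral $\psi\in\Fcal_{d,\lambda}(K)$ written as $\psi(x)=A(x)/B(x)$ in standard form with $A,B\in\Ocal_v[x]$, the reduced map $\tilde\psi_v$ has degree $d$ and is critically separable if and only if $\Delta_{A,B}\in\Ocal_v^\times$. Granting $(\star)$, part (a) follows at once in both directions, since by definition $\phi$ has critically separable good reduction at $v$ exactly when it is $K$-isomorphic to a $v$-integral $\psi\in\Fcal_{d,\lambda}(K)$ whose reduction has degree $d$ and is critically separable: applying $(\star)$ to any such $\psi$ gives one implication, and for the converse the rational map $\psi$ appearing in part (a) is itself $v$-integral, so $(\star)$ yields the required reduction. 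Part (b) is then easy: writing $\phi(x)=A(x)/B(x)$ in standard form over $K$, conditions (F1) and (F4) together with Proposition~\ref{GenDiscProp}(c) give $\Delta_{A,B}\in K^\times$; hence for all but finitely many $v\in M_K^0$ the finitely many coefficients $a_j,b_j$ and $\lambda$ lie in $\Ocal_v$ while $\Delta_{A,B}\in\Ocal_v^\times$, and for each such $v$ the map $\phi$ itself is a model of the kind required by part (a), so $\phi$ has critically separable good reduction at $v$.

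To prove $(\star)$, note that $W_{A,B}=BA'-AB'\in\Ocal_v[x]$, that its reduction $\bar W$ modulo $\Mcal_v$ is the Wronskian $W_{\tilde A_v,\tilde B_v}$ of the reduced pair, and that $\Delta_{A,B}=\disc(W_{A,B})\in\Ocal_v$ with $\overline{\Delta_{A,B}}$ equal to the discriminant of $\bar W$ computed at formal degree $2d-2$ (the discriminant being an integer polynomial in the coefficients). The coefficient of $x^{2d-2}$ in $W_{A,B}$ is $\lambda$, so $M:=\deg\bar W$ equals $2d-2$ precisely when $\lambda\in\Ocal_v^\times$, and $M\le2d-2$ always. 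An elementary identity (factor $Y^{2d-2-M}$ off the homogenization of $\bar W$ to a binary form of degree $2d-2$) gives $\overline{\Delta_{A,B}}=0$ when $M\le2d-4$ and $\overline{\Delta_{A,B}}=\ell^{2(2d-2-M)}\disc(\bar W)$ when $M\in\{2d-3,2d-2\}$, where $\ell\neq0$ is the leading coefficient of $\bar W$ and $\disc(\bar W)$ is its honest discriminant. Hence $\Delta_{A,B}\in\Ocal_v^\times$ if and only if $M\in\{2d-3,2d-2\}$ and $\bar W$ has $M$ distinct roots in $\overline{k}_v$.

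It remains to match this with the geometry of $\tilde\psi_v$. Applying parts (a) and (b) of Proposition~\ref{GenDiscProp} over $k_v$ to the pair $(\tilde A_v,\tilde B_v)$ — their proofs use no standard-form or characteristic hypothesis, only that $\tilde A_v/\tilde B_v$ has degree $d$ in part (b) — one gets that a common root of $\tilde A_v$ and $\tilde B_v$ is a double root of $\bar W$, and that when there is no common root the affine critical points of $\tilde\psi_v$ are exactly the roots of $\bar W$. Moreover $\tilde\psi_v$ fixes $\infty$ with multiplier $\bar\lambda$, so $\infty$ is a critical point of $\tilde\psi_v$ if and only if $\bar\lambda=0$, i.e. if and only if $M<2d-2$. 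Thus if $\Delta_{A,B}\in\Ocal_v^\times$ then $\bar W$ has $M\ge2d-3$ distinct roots, so $\tilde A_v,\tilde B_v$ share no root and $\deg\tilde\psi_v=d$, and the $M$ distinct affine critical points together with $\infty$ (critical exactly when $M=2d-3$) give $2d-2$ distinct critical points, i.e. $\tilde\psi_v$ is critically separable. Conversely, if $\tilde\psi_v$ has degree $d$ and is critically separable, its $2d-2$ distinct critical points consist of the distinct roots of $\bar W$ (at most $M\le2d-2$ of them) possibly together with $\infty$, and the only way to reach $2d-2$ of them is for $\bar W$ to have $M\in\{2d-3,2d-2\}$ distinct roots; then $\disc(\bar W)\neq0$, so $\Delta_{A,B}\in\Ocal_v^\times$.

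The main obstacle is the bookkeeping in the case $\lambda\in\Mcal_v$: there $\bar W$ genuinely drops degree, $\tilde\psi_v$ acquires a critical point at $\infty$ (simple, when $\tilde\psi_v$ is still critically separable), and the reduction of the intrinsic quantity $\Delta_{A,B}$ differs from the naive $\disc(\bar W)$ by a unit power of $\ell$. A direct appeal to Proposition~\ref{GenDiscProp}(c) over $k_v$ applied to $(\tilde A_v,\tilde B_v)$ would overlook this point, so some care is required; everything else, including part (b), is routine.
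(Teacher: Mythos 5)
Your proposal is correct, and its skeleton is the same as the paper's: part (b) is exactly the paper's clearing-of-denominators argument, and part (a) rests on the equivalence between $\Delta_{A,B}\in\Ocal_v^\times$ and good behaviour of the reduced map, which the paper dispatches in one line by citing Proposition~\ref{GenDiscProp}(c). The substantive difference is that you actually prove the local equivalence $(\star)$, whereas the paper's appeal to Proposition~\ref{GenDiscProp}(c) is, strictly speaking, an application of that proposition over the residue field $k_v$ to the pair $(\tilde A_v,\tilde B_v)$ --- and that proposition is stated only for pairs in standard form with nonzero leading coefficient, so it does not literally cover the case $\lambda\in\Mcal_v$, $\bar{\lambda}=0$, which is permitted by the hypothesis $\lambda\in\Ocal_v$ of part (a) (and occurs, e.g., for the Latt\`es family $\Fcal_{4,4}(K)$ at places over $2$). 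Your treatment of that degenerate case --- the reduced Wronskian dropping degree, the identity $\overline{\Delta_{A,B}}=\ell^{2}\disc(\bar W)$ when the degree drops by exactly one and $\overline{\Delta_{A,B}}=0$ when it drops by two or more, and the compensating critical point of $\tilde\psi_v$ at $\infty$ --- is correct and is exactly what is needed to make the ``if and only if'' of part (a) hold in full generality; the Riemann--Hurwitz count of $2d-2$ forces the bookkeeping to close in both directions as you describe. In short: same route as the paper, but you fill in a step the paper leaves implicit (and which its cited proposition does not quite cover); what your version buys is a complete proof of (a) at the places where $\lambda$ reduces to zero.
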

\begin{proof}
(a) This follows at once from the definition of critically separable good reduction along with Proposition~\ref{GenDiscProp} (c).

(b)  Since $\phi$ is critically separable, it follows from Proposition~\ref{GenDiscProp} (c) that $\phi(x)=A(x)/B(x)$ for a pair $(A(x),B(x))$ of polynomials in standard form with coefficients in $K$ and with $\Delta_{A,B}\in K^\times$.  There exists a finite subset $S$ of $M_K$ containing $M_K^\infty$ such that $A(x)$ and $B(x)$ have coefficients in $\Ocal_S$, $\lambda\in \Ocal_S$, and $\Delta_{A,B}\in\Ocal_S^\times$.  By the definition of critically separable good reduction along with Proposition~\ref{GenDiscProp} (c), $\phi$ has critically separable good reduction at all $v\in M_K\setminus S$.
\end{proof}

According to Proposition~\ref{GoodReductionProp} (a), if a rational map $\phi\in\Fcal_{d,\lambda}(K)$ has critically separable good reduction at some place $v\in M_K^0$ such that $\lambda\in\Ocal_v$, then $\phi$ can be written as the ratio of two polynomials $A(x)$ and $B(x)$ possessing certain favorable local properties at the place $v$.  The following lemma, whose main technical ingredient is Lemma~\ref{ClassNumberLemma}, states that polynomials $A(x)$ and $B(x)$ can be found which enjoy these properties globally, at all places $v\in M_K\setminus S$, for sufficiently large subsets $S$ of $M_K$.

\begin{lem}\label{GlobalMinimalModel}
Given a number field $K$, an integer $d\geq2$, and an element $\lambda\in K^\times$, there exists a finite subset $S_0$ of $M_K$ containing $M_K^\infty$ with the following property.  If $S$ is a finite subset of $M_K$ containing $S_0$, and if $\phi\in\Fcal_{d,\lambda}(K)$ has critically separable good reduction at all places $v\in M_K\setminus S$, then there exists a rational map $\psi\in\Fcal_{d,\lambda}(K)$ which is isomorphic to $\phi$, such that $\psi(x)=A(x)/B(x)$ for a pair $(A(x),B(x))$ of polynomials in standard form with coefficients in $\Ocal_S$ and with $\Delta_{A,B}\in\Ocal_S^\times$.
\end{lem}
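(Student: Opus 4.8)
\emph{Approach.} The plan is to patch together local integral models of $\phi$ at the places $v\notin S$ using Lemma~\ref{ClassNumberLemma} as the patching device. I would take $S_0$ to be the finite set of places produced by Lemma~\ref{ClassNumberLemma} applied to $K$ (it may in fact be chosen independently of $d$ and $\lambda$).

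\emph{Local models.} Fix $S\supseteq S_0$ and $\phi\in\Fcal_{d,\lambda}(K)$ with critically separable good reduction at every $v\notin S$. First I would write $\phi(x)=A_0(x)/B_0(x)$ in standard form over $K$ and note, as in the proof of Proposition~\ref{GoodReductionProp}(b), that there is a finite set $S_\phi\supseteq M_K^\infty$ outside of which $A_0,B_0\in\Ocal_v[x]$, $\lambda\in\Ocal_v$, and $\Delta_{A_0,B_0}\in\Ocal_v^\times$ (the last being nonzero because $\phi$ is critically separable, by Proposition~\ref{GenDiscProp}(c)). Next, for each $v\notin S$ I would select $\sigma_v\in\Aut^\infty(\PP^1_K)$: for $v\notin S\cup S_\phi$ take $\sigma_v$ to be the identity; for the finitely many $v\in S_\phi\setminus S$ use Proposition~\ref{GoodReductionProp}(a) — applicable because $\lambda\in\Ocal_v$ whenever $v\notin S$, since $\phi$ has critically separable bad reduction at every place where $\lambda$ is not integral — to obtain $\sigma_v\in\Aut^\infty(\PP^1_K)$ with $\psi_v:=\sigma_v\circ\phi\circ\sigma_v^{-1}=A_v(x)/B_v(x)$ in standard form, $A_v,B_v\in\Ocal_v[x]$, and $\Delta_{A_v,B_v}\in\Ocal_v^\times$; for $v\notin S\cup S_\phi$ one has $\psi_v=\phi$. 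Since $\sigma_v\in\Aut^\infty(\PP^1_{\Ocal_v})$ for all but finitely many $v$, Lemma~\ref{ClassNumberLemma} then yields a single $\sigma\in\Aut^\infty(\PP^1_K)$ with $\tau_v:=\sigma\sigma_v^{-1}\in\Aut^\infty(\PP^1_{\Ocal_v})$ for every $v\notin S$.

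\emph{Globalizing.} The rational map $\psi:=\sigma\circ\phi\circ\sigma^{-1}$ lies in $\Fcal_{d,\lambda}(K)$ (that family being closed under isomorphism); write $\psi(x)=A(x)/B(x)$ in standard form over $K$. To finish I would fix $v\notin S$ and use $\psi=\tau_v\circ\psi_v\circ\tau_v^{-1}$: writing $\tau_v(x)=\alpha x+\beta$ with $\alpha\in\Ocal_v^\times$ and $\beta\in\Ocal_v$, the formulas $(\ref{ABSigma})$ of Proposition~\ref{GenDiscProp}(d) show $\psi=A_v^{\tau_v}(x)/B_v^{\tau_v}(x)$ with $A_v^{\tau_v},B_v^{\tau_v}$ in standard form and in $\Ocal_v[x]$, while $(\ref{DeltaChangeSigma})$ gives $\Delta_{A_v^{\tau_v},B_v^{\tau_v}}=\alpha^{(2d-2)(2d-3)}\Delta_{A_v,B_v}\in\Ocal_v^\times$. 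Uniqueness of the standard form then forces $A=A_v^{\tau_v}$ and $B=B_v^{\tau_v}$, so $A,B\in\Ocal_v[x]$ and $\Delta_{A,B}\in\Ocal_v^\times$; since this holds for every $v\notin S$, intersecting over all such $v$ gives $A,B\in\Ocal_S[x]$ and $\Delta_{A,B}\in\Ocal_S^\times$.

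\emph{Main obstacle.} The crux is the patching step — upgrading models that are $v$-integral one place at a time into a model integral simultaneously at all $v\notin S$ — but that difficulty has been isolated in Lemma~\ref{ClassNumberLemma} (a form of the finiteness of the class number of $K$, via Borel's theorem). The remaining content is the bookkeeping needed to present the local data in exactly the shape required by that lemma, together with the routine observation that conjugating by an element of $\Aut^\infty(\PP^1_{\Ocal_v})$ preserves both $v$-integrality of the standard-form representation and $v$-integrality of the critical discriminant, which is exactly what Proposition~\ref{GenDiscProp}(d) records.
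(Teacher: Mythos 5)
Your proof is correct and follows essentially the same route as the paper: produce local integral models via Proposition~\ref{GoodReductionProp}(a), patch the conjugating automorphisms with Lemma~\ref{ClassNumberLemma}, and transport integrality and the unit discriminant through the formulas of Proposition~\ref{GenDiscProp}(d). Your observation that $S_0$ need not absorb the places where $\lambda\notin\Ocal_v$ (since critically separable good reduction at $v$ already forces $\lambda\in\Ocal_v$) is a valid minor refinement of the paper's choice of $S_0$, but does not change the substance of the argument.
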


\begin{proof}
Taking $S_0$ large enough, we may assume that it contains the set $S_0$ whose existence is established in Lemma~\ref{ClassNumberLemma}, and that $\lambda\in\Ocal_{S_0}$ as well.  Let $S$ be a finite subset of $M_K$ such that $S_0\subseteq S$.  Thus $S$ satisfies the conclusion of Lemma~\ref{ClassNumberLemma}, and $\lambda\in\Ocal_{S}^\times$.

Consider a rational map $\phi\in\Fcal_{d,\lambda}(K)$ with critically separable good reduction at all places $v\in M_K\setminus S$.  We may write $\phi(x)=A_0(x)/B_0(x)$ for polynomials $A_0(x)$ and $B_0(x)$ in standard form, with coefficients in $K$ and with $\Delta_{A_0,B_0}\in K^\times$.    

For each place $v\in M_K\setminus S$, it follows from Proposition~\ref{GoodReductionProp} (a) that there exists a rational map $\psi_v\in\Fcal_{d,\lambda}(K)$ which is isomorphic to $\phi$, such that $\psi_v(x)=A_v(x)/B_v(x)$  for polynomials $A_v(x)$ and $B_v(x)$ in standard form, with coefficients in $\Ocal_v$ and with $\Delta_{A_0,B_0}\in \Ocal_v^\times$.   By the same argument given in the proof of Proposition~\ref{GoodReductionProp} (b), we may take $\psi_v=\phi$, $A_v(x)=A_0(x)$, and $B_v(x)=B_0(x)$ for all except finitely many places $v\in M_K\setminus S$.  

Since each $\psi_v$ is isomorphic to $\phi$, we have $\sigma_v\circ\phi\circ\sigma_v^{-1}=\psi_v$ for some $\sigma_v\in \Aut^\infty(\PP^1_K)$, with $\sigma_v(x)=x$ for all except finitely many places $v\in M_K\setminus S$.  It follows that $\psi_v(x) = A_0^{\sigma_v}(x)/B_0^{\sigma_v}(x)$, where the polynomials $A_0^{\sigma_v}(x)$ and $B_0^{\sigma_v}(x)$ are obtained from $A_0(x)$, $B_0(x)$, and $\sigma_v$ as in $(\ref{ABSigma})$.  Since $\psi_v(x) = A_v(x)/B_v(x)$ as well, and since both pairs $A_0^{\sigma_v}(x), B_0^{\sigma_v}(x)$ and $A_v(x), B_v(x)$ are in standard form, this implies that 
\begin{equation}\label{SigmavChange}
\begin{split}
A_v(x) & = A_0^{\sigma_v}(x), \\
B_v(x) & = B_0^{\sigma_v}(x),
\end{split}
\end{equation}
for all $v\in M_K\setminus S$.  

By Lemma~\ref{ClassNumberLemma} there exists some $\sigma\in \Aut^\infty(\PP^1_K)$ such that $\sigma\sigma_v^{-1}\in \Aut^\infty(\PP^1_{\Ocal_v})$ for all $v\in M_K\setminus S$.  Define 
\begin{equation}\label{SigmaChange}
\begin{split}
A(x) & = A_0^\sigma(x), \\
B(x) & = B_0^\sigma(x),
\end{split}
\end{equation}
where $A_0^{\sigma}(x)$ and $B_0^{\sigma}(x)$ are obtained from $A_0(x)$, $B_0(x)$, and $\sigma$ as in $(\ref{ABSigma})$.  Defining $\psi:\PP_K^1\to\PP_K^1$ by $\psi(x)=A(x)/B(x)$, plainly $\sigma\circ\phi\circ\sigma^{-1}=\psi$, so $\psi$ is isomorphic to $\phi$.

Given $v\in M_K\setminus S$, a calculation using $(\ref{SigmavChange})$ and $(\ref{SigmaChange})$ shows that 
\begin{equation}\label{SigmaSigmavChange}
\begin{split}
A(x) & = A_v^{\sigma\sigma_v^{-1}}(x), \\
B(x) & = B_v^{\sigma\sigma_v^{-1}}(x).
\end{split}
\end{equation}
Since both $A_v(x)$ and $B_v(x)$ have coefficients in $\Ocal_v$, and since $\sigma\sigma_v^{-1}\in\Aut^\infty(\PP_{\Ocal_v}^1)$, we conclude from $(\ref{SigmaSigmavChange})$ that both $A(x)$ and $B(x)$ have coefficients in $\Ocal_v$ as well.  Since $\Delta_{A_v,B_v}\in\Ocal_v^\times$, it follows from $(\ref{SigmaSigmavChange})$ and $(\ref{DeltaChangeSigma})$ that $\Delta_{A,B}\in\Ocal_v^\times$ as well.  Finally, since $v\in M_K\setminus S$ is arbitrary, we conclude that $A(x)$ and $B(x)$ have coefficients in $\Ocal_S$ and that $\Delta_{A,B}\in\Ocal_S^\times$.
\end{proof}

Given a rational map $\phi:\PP_K^1\to\PP_K^1$ of degree $d\geq2$, denote by $\Crit(\phi)$ the set of critical points of $\phi$ in $\PP^1(\Kbar)$.  

\begin{lem}\label{RigidityLemma}
Let $K$ be a number field, let $d\geq2$ be an integer, and let $\lambda\in K^\times$.  If $Z$ is a finite subset of $\PP^1(\Kbar)$, then there exist only finitely many rational maps $\phi\in\Fcal_{d,\lambda}(K)$ such that $\Crit(\phi)\subseteq Z$.
\end{lem}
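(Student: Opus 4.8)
The plan is to reduce to a prescribed critical locus and then combine two facts: a classical finiteness theorem for degree-$d$ rational maps with a prescribed critical locus, and a rigidity statement to the effect that conditions (F2) and (F3) leave at most one member of $\Fcal_{d,\lambda}(K)$ in each post-composition orbit.

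First I would reduce to the case of a prescribed critical locus. Any $\phi\in\Fcal_{d,\lambda}(K)$ has $\deg\phi=d$ by (F1) and, by (F4), exactly $2d-2$ distinct critical points in $\PP^1(\Kbar)$; moreover $\infty\notin\Crit(\phi)$, since by (F2) the multiplier of $\phi$ at $\infty$ equals $\lambda\neq0$. Hence $\Crit(\phi)$ is one of the finitely many subsets $W\subseteq Z$ with $|W|=2d-2$, and it suffices to fix such a $W$ and prove that $R_W:=\{\phi\in\Fcal_{d,\lambda}(K):\Crit(\phi)=W\}$ is finite. For this I would invoke the classical theorem of \cite{MR1093002}: for a finite set $W\subset\PP^1(\CC)$ with $|W|=2d-2$ there are only finitely many rational maps $\phi:\PP^1_\CC\to\PP^1_\CC$ of degree $d$ whose critical locus is exactly $W$, modulo the equivalence $\phi\sim\sigma\circ\phi$ for $\sigma\in\Aut(\PP^1_\CC)$; this equivalence makes sense on maps with critical locus $W$ because post-composition by an automorphism is everywhere unramified and so does not change the critical locus. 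Viewing $K\subset\CC$, the set $R_W$ therefore meets only finitely many of these classes, and it remains to show that it meets each of them at most once.

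That last point is the technical heart, and it is exactly where (F2) and (F3) are used. Suppose $\phi,\psi\in\Fcal_{d,\lambda}(K)$ satisfy $\psi=\sigma\circ\phi$ for some $\sigma\in\Aut(\PP^1_\CC)$; the claim is that $\sigma$ is the identity. Since $\phi(\infty)=\psi(\infty)=\infty$ by (F2), $\sigma$ fixes $\infty$, so $\sigma(x)=\alpha x+\beta$ with $\alpha\in\CC^\times$, $\beta\in\CC$. Writing $\phi=A/B$ and $\psi=\hat A/\hat B$ in standard form, the relation $\psi=\alpha\phi+\beta$ reads $\psi(x)=(\alpha A(x)+\beta B(x))/B(x)$; dividing numerator and denominator by $\alpha$ to restore a monic numerator and using the uniqueness of the standard form yields $\hat B=\alpha^{-1}B$, so the leading coefficient of $\hat B$ equals $\alpha^{-1}\lambda$, which by (F2) applied to $\psi$ must be $\lambda$; hence $\alpha=1$. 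Then $\hat A=A+\beta B$ and $\hat B=B$, so $\hat a_{d-1}=a_{d-1}+\beta\lambda$ and $\hat b_{d-2}=b_{d-2}$; subtracting the instance of (F3) for $\phi$ from the instance of (F3) for $\psi$ gives $\beta\lambda=0$, hence $\beta=0$ since $\lambda\neq0$, and so $\sigma=\mathrm{id}$ and $\psi=\phi$. Combined with the reduction of the previous paragraph, this finishes the proof: $|R_W|$ is bounded by the finite number of post-composition classes supplied by \cite{MR1093002}, and summing over the finitely many $W\subseteq Z$ completes the argument.

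The main obstacle is the classical finiteness input \cite{MR1093002}, which carries the geometric weight of the argument; everything else is bookkeeping. Two points require care. First, that input must be the statement about a prescribed critical \emph{locus} (a set of points), taken up to post-composition only — not a statement about ramification portraits up to independent pre- and post-composition — since only post-composition preserves the critical locus. Second, on the rigidity side, $\sigma$ is initially only an element of $\Aut(\PP^1_\CC)$, so (F2) must be used first to force $\sigma\in\Aut^\infty(\PP^1_\CC)$ before (F3) can be brought to bear; it is the combination of the two conditions, rather than either alone, that collapses the orbit to a point.
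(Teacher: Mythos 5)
Your proposal is correct and follows essentially the same route as the paper: finiteness of post-composition classes of degree-$d$ maps with prescribed critical locus (Goldberg \cite{MR1093002}), combined with the rigidity argument that (F2) forces $\sigma$ to fix $\infty$ with $\alpha=1$ and (F3) then forces $\beta=0$, so each class meets $\Fcal_{d,\lambda}(K)$ at most once. The only cosmetic difference is that you partition by the exact $(2d-2)$-element critical locus $W\subseteq Z$, whereas the paper works directly with the set of maps whose critical locus is contained in $Z$.
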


\begin{proof}
Let $R_Z$ denote the set of all rational maps $\phi:\PP^1_K\to\PP^1_K$ of degree $d$ such that $\Crit(\phi)\subseteq Z$; we may assume that $R_Z$ is nonempty, since the lemma is trivial otherwise.  Since $\Crit(\sigma\circ\phi)=\Crit(\phi)$ for all rational maps $\phi:\PP^1_K\to\PP^1_K$ and all automorphisms $\sigma\in\Aut(\PP_K^1)$, we have a post-composition action $(\sigma,\phi)\mapsto\sigma\circ\phi$ of $\Aut(\PP_K^1)$ on $R_Z$.  Denote by $R_Z/\Aut(\PP_K^1)$ the set of orbits under this action, and given $\phi\in R_Z$, denote its orbit by $\langle\phi\rangle$.  Then:
\begin{enumerate}
\item[(i)] $R_Z$ is equal to a finite union of post-composition orbits $\langle\phi\rangle$;
\item[(ii)] $\langle\phi\rangle\cap \Fcal_{d,\lambda}(K)$ contains at most one element for each post-composition orbit $\langle\phi\rangle$.
\end{enumerate}  
Together, (i) and (ii) imply that $R_Z\cap\Fcal_{d,\lambda}(K)$ is finite, which is the desired result.

Assertion (i) is a classical fact going back to Schubert; see Goldberg \cite{MR1093002} for a sharp, quantitative version of this result.  To show (ii), suppose that both $\phi$ and $\sigma\circ\phi$ are elements of the family $\Fcal_{d,\lambda}(K)$; we must show that $\sigma$ is the identity element of $\Aut(\PP_K^1)$.  The fact that both $\phi$ and $\sigma\circ\phi$ fix $\infty$ implies that $\sigma$ fixes $\infty$; thus $\sigma(x)=\alpha x+\beta$ for some $\alpha\in K^\times, \beta\in K$.  Since $\infty$ is a fixed point of $\phi$ with multiplier $\lambda$, it is a fixed point of $\sigma\circ\phi$ with multiplier $\alpha^{-1}\lambda$.  But since $\sigma\circ\phi\in\Fcal_{d,\lambda}(K)$, we deduce that $\alpha^{-1}\lambda=\lambda$, whereby $\alpha=1$, and thus $\sigma(x)=x+\beta$.  Writing $\phi(x)$ as in $(\ref{RatMapAffine})$, the fact that both $\phi$ and $\sigma\circ\phi$ satisfy condition (F3) in the definition of the family $\Fcal_{d,\lambda}(K)$ means that both of the identities 
\begin{equation*}
\begin{split}
a_{d-1} & = \epsilon b_{d-2} \\
a_{d-1} +\beta \lambda& = \epsilon b_{d-2}
\end{split}
\end{equation*}
hold.  Since $\lambda\neq0$, subtracting the two identities we obtain $\beta=0$, and thus $\sigma(x)=x$, as desired.
\end{proof}

\begin{proof}[Proof of Theorem~\ref{FinitenessTheoremIntro}]  Enlarging the set $S$ only enlarges the set whose finiteness we are trying to prove, and so without loss of generality we may assume that $S$ contains the set $S_0$ of places whose existence is established in Lemma~\ref{GlobalMinimalModel}, and we may assume that $\lambda\in\Ocal_S^\times$.  

Suppose, contrary to the statement of the theorem, that there exists an infinite sequence $\{\phi_\ell\}$ ($\ell=1,2,3\dots$) of pairwise non-isomorphic rational maps in $\Fcal_{d,\lambda}(K)$ having critically separable good reduction at all places $v\in M_K\setminus S$.  Using Lemma~\ref{GlobalMinimalModel}, after possibly replacing each $\phi_\ell$ with another rational map in its isomorphism class, we may assume without loss of generality that $\phi_\ell(x)=A_\ell(x)/B_\ell(x)$, for polynomials $A_\ell(x)$ and $B_\ell(x)$ in standard form, with coefficients in $\Ocal_S$ and with $\Delta_{A_\ell,B_\ell}\in\Ocal_S^\times$.

For each $\ell$, define $f_\ell(x) = \lambda^{-1}W_{A_\ell,B_\ell}(x)$.  Then $f_\ell(x)\in\Ocal_S[x]$ is monic, vanishes precisely at the critical points of $\phi_\ell$ in $\Kbar$, and satisfies $\disc(f_\ell)\in\Ocal_S^\times$.  According to Theorem~\ref{BirchMerrimanTheoremAffine}, after passing to an infinite subsequence of $\{\phi_\ell\}$, we may assume without loss of generality that each $f_\ell(x)$ is $\Ocal_S$-equivalent to $f_1(x)$.  This means that for each $\ell$, $f_\ell(x)=\alpha_\ell^{-(2d-2)} f_1(\alpha_\ell x+\beta_\ell)$ for some $\alpha_\ell\in\Ocal_S^\times$ and $\beta_\ell\in\Ocal_S$.  Defining $\sigma_\ell\in\Aut^\infty(\PP^1_K)$ by $\sigma_\ell(x)=\alpha_\ell x+\beta_\ell$, and letting $\psi_\ell=\phi_\ell^{\sigma_\ell}=\sigma_\ell\circ\phi_\ell\circ\sigma_\ell^{-1}$, it follows that $\Crit(\psi_\ell)=\Crit(\phi_1)$ for all $\ell$.  

We have produced an infinite sequence $\{\psi_\ell\}$ of distinct rational maps in $\Fcal_{d,\lambda}(K)$ having the same set of critical points.  This violates Lemma~\ref{RigidityLemma}, and the contradiction completes the proof.
\end{proof}

\begin{rem}  Our interest in the family $\Fcal_{d,\lambda}(K)$ is motivated by an attempt to give a natural generalization of the family $\Lcal(K)$ of Latt\`es maps.  However, it is not hard to modify conditions (F2) and (F3) to produce other potentially interesting families of critically separable rational maps for which the methods of this paper apply.

For example, fix an integer $d\geq2$ and and element $\lambda\in K^\times$, and define $\Fcal(K)$ to be the set of all critically separable rational maps of degree $d$ defined over $K$ such that $\infty$ is a fixed point of $\phi$ with multiplier $\lambda$, and such that $0$ is a fixed point of $\phi$ (with arbitrary multiplier).  Observe that the family $\Fcal(K)$ is closed under conjugation by the group
\begin{equation*}
\Gcal(K)=\{\sigma\in\Aut(\PP_K^1)\mid\sigma(x)=\alpha x\text{ for some } \alpha\in K^\times\}.
\end{equation*}
Define {\em $K$-isomorphism} between two rational maps in the family $\Fcal(K)$ via $\Gcal(K)$-conjugation, and declare that a rational map $\phi\in\Fcal(K)$ has {\em critically separable good reduction} at a non-Archimedean place $v$ of $K$ if $\phi$ is $K$-isomorphic to a $v$-integral rational map $\psi\in\Fcal(K)$ such that the reduced rational map $\tilde{\psi}_v:\PP^1_{ k_v}\to\PP^1_{ k_v}$ has degree $d$ and is critically separable.  It is not hard to see that the intersection $\langle\phi\rangle\cap\Fcal(K)$ contains at most one rational map for each $\phi\in\Fcal(K)$, where $\langle\phi\rangle$ denotes the orbit of $\phi$ under the post-composition action of $\Aut(\PP^1_K)$ (in fact, it is enough to know that this intersection is finite); this observation is required for the family $\Fcal(K)$ to satisfy the statement of Lemma~\ref{RigidityLemma}.  

It follows from a straightforward modification of the proof of Theorem~\ref{FinitenessTheoremIntro} that for each finite subset $S$ of $M_K$ containing $M_K^\infty$, the family $\Fcal(K)$ contains only finitely many $K$-isomorphism classes of rational maps having critically separable good reduction at all places $v\not\in S$.
\end{rem}

\section{The Minimal Critical Discriminant}\label{MinCritDisc}

Given an elliptic curve $E/K$, its {\em minimal discriminant} $\Delta(E)$ is a certain integral ideal of $\Ocal_K$ which can be viewed as a global measure of the arithmetic complexity of the curve.  Explicitly,
\begin{equation*}
\Delta(E) = \prod_{v\in M_K^0}\pfrak_v^{\delta_v(E)},
\end{equation*}
where for each non-Archimedean place $v\in M_K^0$, $\pfrak_v$ denotes the associated prime ideal of $\Ocal_K$, and the exponent $\delta_v(E)$ is defined to be the minimal $v$-adic valuation $\ord_v(\Delta)$ over the discriminants $\Delta$ of all $v$-integral Weierstrass equations for $E$ over $K$.  

It follows from Shafarevich's theorem that the norm $\NN_{K/\QQ}(\Delta(E))$ of the minimal discriminant is bounded above by a quantity depending on the number field $K$ and on the set of places at which $E/K$ has bad reduction, but not depending otherwise on the curve $E$.  The following well-known conjecture of Szpiro would give one possible quantitative version of this bound.  Given an ideal $\afrak$ of $\Ocal_K$, define its {\em radical} to be the squarefree product $\Rfrak(\afrak)=\prod_{\pfrak\mid\afrak}\pfrak$ of the prime ideals dividing it.  In particular, $\Rfrak(\Delta(E))$ is simply the squarefree product of the prime ideals $\pfrak_v$ at which $E/K$ has bad reduction.
 
\begin{SzpConj}[\cite{MR1065151}]\label{SzpiroConjecture}
Let $K$ be a number field and let $\epsilon>0$.  Then
\begin{equation}\label{SzpiroIneq}
\NN_{K/\QQ}(\Delta(E)) \ll_{K,\epsilon} \NN_{K/\QQ}(\Rfrak(\Delta(E)))^{6+\epsilon}
\end{equation}
for all semistable elliptic curves $E/K$.
\end{SzpConj}

Recall that $E/K$ is said to be {\em semistable} if it has either good or multiplicative reduction at all places $v\in M_K^0$.  (Szpiro's Conjecture can be stated without the semistable requirement, provided that the squarefree radical $\Rfrak(\Delta(E))$ is replaced with the conductor of $E/K$, a more complicated invariant which we do not need to consider in this paper.)  Szpiro's conjecture for $K=\QQ$ is closely related to the $abc$ conjecture of Masser-Oesterl\'e (see \cite{bombierigubler} $\S$12.5), and a proof of Szpiro's conjecture would also have a number of interesting consequences concerning the arithmetic of elliptic curves; see for example \cite{hindrysilverman:integralpts}, \cite{MR2259240}.

In this section we formulate a conjecture which bears roughly the same relationship to Theorem~\ref{FinitenessTheoremIntro} as Szpiro's conjecture bears to Shafarevich's theorem.  Again let $K$ be a number field, let $d\geq2$ be an integer, let $\lambda\in K^\times$, and denote by $S_\lambda$ the (finite) set of places of $K$ which are either Archimedean or for which $\lambda\not\in\Ocal_v$.  

Given a rational map $\phi\in\Fcal_{d,\lambda}(K)$ and a place $v\in M_K\setminus S_\lambda$, define $\delta_v(\phi)$ to be the minimal value of $\ord_v(\Delta_{A,B})$ over all pairs $(A(x),B(x))$ of polynomials in standard form with coefficients in $\Ocal_v$, such that the rational map $\psi:\PP_K^1\to\PP_K^1$ given by $\psi(x)=A(x)/B(x)$ is isomorphic to $\phi$.  Since the critical discriminant $\Delta_{A,B}$ is an integral polynomial in the coefficients of $A(x)$ and $B(x)$, it follows that $\ord_v(\Delta_{A,B})\geq0$ for all such pairs $(A(x),B(x))$, and therefore $\delta_v(\phi)$ is a nonnegative integer.  Define the {\em minimal critical discriminant} of $\phi$ to be the integral ideal of $\Ocal_K$ given by
\begin{equation*}
\Delta(\phi) = \prod_{v\in M_K\setminus S_\lambda}\pfrak_v^{\delta_v(\phi)}.
\end{equation*}
Thus $\Delta(\phi)$ is supported precisely on the set of places $v\in M_K\setminus S_\lambda$ at which $\phi$ has critically separable bad reduction.    

\begin{conj}\label{CondDiscConjecture}
Let $K$ be a number field, let $d\geq3$ be an integer, let $\lambda\in K^\times$, and let $\epsilon>0$.  Then
\begin{equation}\label{ConjIneq}
\NN_{K/\QQ}(\Delta(\phi)) \ll_{K,d,\lambda,\epsilon} \NN_{K/\QQ}(\Rfrak(\Delta(\phi)))^{(2d-2)(2d-3)+\epsilon}
\end{equation}
for all $\phi\in\Fcal_{d,\lambda}(K)$.
\end{conj}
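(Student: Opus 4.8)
\emph{Proof proposal.} This is a dynamical analogue of Szpiro's Conjecture, so an unconditional proof is not to be expected; the plan below explains why $(2d-2)(2d-3)$ is the natural exponent, reduces the conjecture to a standard Diophantine statement, and isolates the case in which that statement is presently available. Write $N=2d-2$. The first step passes from the ideal $\Delta(\phi)$ to the discriminant of a single polynomial. Enlarging $S$ past the set $S_0$ of Lemma~\ref{GlobalMinimalModel} and combining the local minimizations defining the exponents $\delta_v(\phi)$ at the bad places with the class-number (adelic) argument underlying Lemma~\ref{GlobalMinimalModel}, one obtains a rational map $\psi(x)=A(x)/B(x)$ isomorphic to $\phi$, with $(A,B)$ in standard form over $\Ocal_S$, such that the monic degree-$N$ polynomial $f(x)=\lambda^{-1}W_{A,B}(x)\in\Ocal_S[x]$ has root set $\Crit(\psi)$ and satisfies $\ord_v(\disc f)=\delta_v(\phi)$ for all but boundedly many $v\notin S$ (this uses Proposition~\ref{GenDiscProp} and $\lambda\in\Ocal_S^\times$); thus $\NN_{K/\QQ}(\Delta(\phi))$ and $\NN_{K/\QQ}(\disc f)$ agree up to a factor bounded in terms of $K$, $d$, $\lambda$ (and the ideal class group of $K$), and likewise for the radicals. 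Consequently $(\ref{ConjIneq})$ is equivalent to the assertion that $\NN_{K/\QQ}(\disc f)\ll_{K,d,\lambda,\epsilon}\NN_{K/\QQ}(\Rfrak(\disc f))^{N(N-1)+\epsilon}$, uniformly over all such models $f$; note $N(N-1)=(2d-2)(2d-3)$.

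The second step establishes this by $abc$. Factoring $\disc f=\prod_{i<j}(r_i-r_j)^2$ over the splitting field of $f$ --- with the roots $r_i$ integral over $\Ocal_S$, and after the same Galois-descent bookkeeping used in the proof of Theorem~\ref{BirchMerrimanTheoremAffine} --- fix two indices $i\neq j$. Since $N=2d-2\geq4$ there is a third root $r_k$, and $(r_i-r_j)+(r_j-r_k)=(r_i-r_k)$ is a nondegenerate instance of $a+b=c$ all of whose terms are $v$-units for every $v$ outside $S$ and outside the support of $\disc f$. The $abc$ conjecture of Masser-Oesterl\'e over $K$ then gives $\NN_{K/\QQ}(r_i-r_j)\ll_{K,\epsilon}\NN_{K/\QQ}(\Rfrak(\disc f))^{1+\epsilon}$; raising this to the power $2$ and multiplying over the $\binom{N}{2}$ pairs of roots yields $\NN_{K/\QQ}(\disc f)=\prod_{i<j}\NN_{K/\QQ}(r_i-r_j)^2\ll\NN_{K/\QQ}(\Rfrak(\disc f))^{N(N-1)+\epsilon}$, which is the inequality of the first step. (The hypothesis $d\geq3$ is precisely what makes a third root available; for $d=2$ the analogous bound already fails. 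It is reassuring that $N(N-1)$ is also the scaling weight of the critical discriminant recorded in $(\ref{DeltaChangeSigma})$, so it is the only polynomial exponent that has a chance of being correct.) The main obstacle is evident: the argument invokes the $abc$ conjecture, and accordingly Conjecture~\ref{CondDiscConjecture} is of $abc$/Szpiro strength --- in harmony with the fact that the proof of Theorem~\ref{FinitenessTheoremIntro} already rests on the (qualitative) $S$-unit equation.

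One instance of the conjecture does reduce to established results. Take $d=4$, $\lambda=4$, and restrict to the Latt\`es subfamily of $\Fcal_{4,4}(K)$: by $(\ref{CritDiscLattes})$ and $(\ref{EllipticDisc})$ the critical discriminant of $\phi_{a,b,c}$ is $\Delta_E^5$ up to sign and a power of $2$, where $E$ is the elliptic curve $y^2=x^3+ax^2+bx+c$, and for $v\nmid 2$ the minimization defining $\delta_v(\phi_{a,b,c})$ coincides with the one defining the minimal-discriminant exponent $\delta_v(E)$. Consequently $\NN_{K/\QQ}(\Delta(\phi_{a,b,c}))$ and $\NN_{K/\QQ}(\Delta(E))^5$ agree up to a bounded factor, as do their radicals, so for $E$ semistable Szpiro's Conjecture --- applied with $\epsilon/5$ in place of $\epsilon$ --- gives $(\ref{ConjIneq})$ for $\phi_{a,b,c}$ with exponent $5(6+\epsilon/5)=30+\epsilon=(2d-2)(2d-3)+\epsilon$. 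Thus Conjecture~\ref{CondDiscConjecture} restricted to Latt\`es maps attached to semistable elliptic curves follows from Szpiro's Conjecture with matching exponent --- a partial converse to Theorem~\ref{ImpliesSzpiro}. For other $(d,\lambda)$ one might similarly hope to find special subfamilies reducible to known Diophantine inputs, with higher-degree Latt\`es maps the natural candidates, but for the conjecture in full generality the $abc$-type obstruction of the second step seems unavoidable.
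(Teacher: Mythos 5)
\emph{Referee's note to the student.} A point of order first: the statement you were asked to prove is a \emph{conjecture}, and the paper contains no proof of it. The only support offered there is the heuristic that the exponent $(2d-2)(2d-3)$ is the weight by which the critical discriminant scales under conjugation, recorded in $(\ref{DeltaChangeSigma})$ and $(\ref{MinValueHeuristic})$, the trivial verification for $d=2$ in the closing remark, and Theorem~\ref{ImpliesSzpiro}, which runs in the opposite direction (the conjecture implies Szpiro). So there is no proof in the paper to compare yours against, and you rightly do not claim an unconditional argument; what must be judged is whether your conditional reduction to $abc$ is sound.

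It is not, as written: the gap is in the pairwise application of $abc$. The differences $r_i-r_j$ are not coprime (nor even $T$-coprime), so $abc$ applied to $(r_i-r_j)+(r_j-r_k)=(r_i-r_k)$ only bounds the triple \emph{after dividing out its common factor} $g$; what it controls is essentially the height of the unit $(r_i-r_j)/(r_i-r_k)$, not the norm of the ideal generated by $r_i-r_j$. The trivial estimate $g^6\mid\disc f$ then gives only $\NN((r_i-r_j))\ll\NN(\disc f)^{1/6}\,\NN(\Rfrak(\disc f))^{1+\epsilon}$, and multiplying over the $\binom{N}{2}$ pairs yields $\NN(\disc f)\ll\NN(\disc f)^{N(N-1)/6}\,\NN(\Rfrak(\disc f))^{N(N-1)+\epsilon}$, which is vacuous for every $N\geq 3$ — in particular for $N=2d-2\geq 4$. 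In the classical derivation of Szpiro from $abc$ this is precisely where \emph{minimality} must be used: one shows that the roots of a minimal model cannot all be congruent modulo a high power of $\pfrak_v$, else a translation-and-scaling would lower $\ord_v$ of the discriminant. Your proposal never invokes the minimality built into $\delta_v(\phi)$ at this step, so ``raise to the power $2$ and multiply over the pairs'' does not go through. (Secondary issues: the norms must be taken from the splitting field $L$, not $K$; and converting the height bound supplied by $abc$ into a bound on the norm of the ideal $(r_i-r_j)$ requires controlling the Archimedean and $S$-contributions, which again depends on the choice of model. Your first step — a global quasi-minimal model with $\ord_v(\disc f)=\delta_v(\phi)$ up to a bounded defect — is believable via the adelic argument of Lemma~\ref{ClassNumberLemma}, but it too is asserted rather than carried out.) A correct $abc$-to-Conjecture~\ref{CondDiscConjecture} reduction may well exist, in the spirit of known discriminant conjectures for polynomials, but it is genuinely harder than the three lines given.

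Your final paragraph is the soundest part of the proposal and a worthwhile complement to the paper: combining $(\ref{TwoDiscriminants})$ with the equality $\delta_v(\phi_{a,b,c})=5\delta_v(E)$ for $v\nmid 2$ (one inequality is $(\ref{ImpliesSzpiroLocalIneq})$, the other follows by feeding a minimal Weierstrass model into the Latt\`es construction) shows that Szpiro's conjecture implies Conjecture~\ref{CondDiscConjecture} restricted to Latt\`es maps of semistable curves, with the matching exponent $30$. This partial converse to Theorem~\ref{ImpliesSzpiro} deserves to be recorded, though it says nothing about non-Latt\`es members of $\Fcal_{4,4}(K)$, let alone about general $(d,\lambda)$.
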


The conjectural exponent of $(2d-2)(2d-3)+\epsilon$ is suggested by the analogy with Szpiro's conjecture, along with the identity $(\ref{DeltaChangeSigma})$.  Given a place $v\in M_K\setminus S_\lambda$ and a rational map $\psi(x)=A(x)/B(x)$ which is isomorphic to $\phi$, where $(A(x),B(x))$ is a pair of polynomials in standard form, with coefficients in $\Ocal_v$, we have $\ord_v(\Delta_{A,B})\geq0$, and the identity $(\ref{DeltaChangeSigma})$ implies that $\ord_v(\Delta_{A,B})$ is well-defined (independent of $\psi$) modulo $(2d-2)(2d-3)$.  It follows that
\begin{equation}\label{MinValueHeuristic}
\ord_v(\Delta_{A,B})<(2d-2)(2d-3) \hskip5mm \Longrightarrow \hskip5mm \delta_v(\phi)=\ord_v(\Delta_{A,B}).
\end{equation}
The converse of $(\ref{MinValueHeuristic})$ need not hold, but Conjecture~\ref{CondDiscConjecture} predicts that it almost holds in the average over all places $v\in M_K\setminus S_\lambda$; that is, the conjecture implies that $\delta_v(\phi)$ is globally not often larger than $(2d-2)(2d-3)$ as $v$ ranges over all places in $M_K\setminus S_\lambda$.

In view of the correspondence between elliptic curves and and Latt\`es maps, it should not come as a surprise to find a close relationship between Szpiro's conjecture and Conjecture~\ref{CondDiscConjecture}.  In Theorem~\ref{ImpliesSzpiro} we will use the fact that the family $\Lcal(K)$ of Latt\`es maps is contained in the family $\Fcal_{4,4}(K)$ to show that Conjecture~\ref{CondDiscConjecture} (in the special case $d=\lambda=4$) implies Szpiro's conjecture for semistable elliptic curves.  We will first need two technical results.

\begin{prop}\label{LattesInvProp}
The family $\Lcal(K)$ of Latt\`es maps defined in $\S$\ref{Background} is invariant under $\Aut^\infty(\PP_K^1)$-conjugation.  More precisely, let $f(x)=x^3+ax^2+bx+c$ be a monic polynomial in $K[x]$ with distinct roots, let $\phi_{a,b,c}\in\Lcal(K)$ be the associated Latt\`es map defined in $\S$\ref{Background}, and let $\sigma\in\Aut^\infty(\PP_K^1)$ be an automorphism given by $\sigma(x)=\alpha x+\beta$ for $\alpha\in K^\times$ and $\beta\in K$.  Then $\sigma\circ\phi_{a,b,c}\circ\sigma^{-1}=\phi_{a^*,b^*,c^*}$, where the polynomial $f^*(x)=x^3+a^*x^2+b^*x+c^*$ is defined by $f^*(x)=\alpha^3f(\alpha^{-1}(x-\beta))$.
\end{prop}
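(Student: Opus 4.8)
The plan is to exploit the commutative diagram $(\ref{LattesMap})$ that defines $\phi_{a,b,c}$ and to observe that conjugating the bottom row by $\sigma\in\Aut^\infty(\PP^1_K)$ corresponds to changing the Weierstrass model of the elliptic curve $E/K$ by a linear substitution in the $x$-coordinate. Concretely, the Latt\`es map $\phi_{a,b,c}$ completes the diagram in which the vertical maps are the $x$-coordinate map $x:E\to\PP^1_K$ for the curve $E:y^2=f(x)$ and the top map is $[2]$. If we replace the Weierstrass equation $y^2=f(x)$ by $y^2=f^*(x)$ with $f^*(x)=\alpha^3 f(\alpha^{-1}(x-\beta))$, the corresponding curve $E^*$ is isomorphic to $E$ via the change of coordinates $(x,y)\mapsto(\alpha^{-1}(x-\beta),\alpha^{-3/2}y)$ (a Weierstrass isomorphism in the generalized sense; the non-rational scaling of $y$ is harmless because $\phi$ depends only on $x$-coordinates). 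Under this isomorphism, the $x$-coordinate map for $E^*$ is $\sigma\circ x$, where $\sigma(x)=\alpha x+\beta$, and $[2]$ on $E^*$ corresponds to $[2]$ on $E$. Chasing the resulting diagram shows that the Latt\`es map attached to $f^*$ is $\sigma\circ\phi_{a,b,c}\circ\sigma^{-1}$, which is exactly the claimed identity $\sigma\circ\phi_{a,b,c}\circ\sigma^{-1}=\phi_{a^*,b^*,c^*}$ with $f^*=x^3+a^*x^2+b^*x+c^*$.

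First I would record the needed fact about how the coefficients transform: expanding $f^*(x)=\alpha^3 f(\alpha^{-1}(x-\beta))$ gives explicit formulas for $a^*,b^*,c^*$ in terms of $a,b,c,\alpha,\beta$, and in particular $f^*$ is again monic of degree $3$ with distinct roots (the roots of $f^*$ are $\alpha r_i+\beta$ for the roots $r_i$ of $f$), so $\phi_{a^*,b^*,c^*}$ is a well-defined member of $\Lcal(K)$. Next I would set up the two commutative diagrams of type $(\ref{LattesMap})$ — one for $E:y^2=f(x)$ with bottom map $\phi_{a,b,c}$, one for $E^*:y^2=f^*(x)$ with bottom map $\phi_{a^*,b^*,c^*}$ — and glue them along the Weierstrass isomorphism $E\to E^*$. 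Since this isomorphism commutes with $[2]$ and carries the $x$-coordinate map of $E$ to $\sigma^{-1}\circ(x\text{-coordinate map of }E^*)$, a diagram chase yields $\phi_{a^*,b^*,c^*}\circ\sigma = \sigma\circ\phi_{a,b,c}$ on the image of $x:E\to\PP^1_K$, which is all of $\PP^1_K$; hence $\phi_{a^*,b^*,c^*}=\sigma\circ\phi_{a,b,c}\circ\sigma^{-1}$.

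Alternatively — and this is the version I would actually write out, to keep everything within the algebraic framework already developed in $\S$\ref{GenDynSysSect} — one can bypass the elliptic curve entirely and verify the identity directly. Write $\phi_{a,b,c}(x)=A(x)/B(x)$ with $A(x)=x^4-2bx^2-8cx+b^2-4ac$ and $B(x)=4x^3+4ax^2+4bx+4c=4f(x)$, and let $A^\sigma(x),B^\sigma(x)$ be the standard-form representatives of $\sigma\circ\phi_{a,b,c}\circ\sigma^{-1}$ given by Proposition~\ref{GenDiscProp}(d). One then checks that $A^\sigma(x)=A^*(x)$ and $B^\sigma(x)=B^*(x)$, where $A^*,B^*$ are the numerator and denominator built from $f^*$ by the same recipe; the key relation is $B^*(x)=4f^*(x)=4\alpha^3 f(\alpha^{-1}(x-\beta))=\alpha^3 B(\alpha^{-1}(x-\beta))$, which matches $B^\sigma$ since here $d=4$ gives $B^\sigma(x)=\alpha^{3}B(\alpha^{-1}(x-\beta))$, and the identity $A(x)=f'(x)^2-(8x+4a)f(x)$ noted in $\S$\ref{GenDynSysSect} propagates to $A^*$ and handles the numerator after comparing with the formula for $A^\sigma$ in $(\ref{ABSigma})$.

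The main obstacle is purely bookkeeping: reconciling the scaling in $(\ref{ABSigma})$ (where $B^\sigma(x)=\alpha^{d-1}B(\alpha^{-1}(x-\beta))$ and $A^\sigma$ mixes $\alpha^d A$ with $\alpha^{d-1}\beta B$) with the substitution $f\mapsto f^*=\alpha^3 f(\alpha^{-1}(x-\beta))$, and confirming that the cross term $\alpha^{d-1}\beta B(\alpha^{-1}(x-\beta))$ in $A^\sigma$ is exactly what is needed to turn $\alpha^4 A(\alpha^{-1}(x-\beta))$ into $A^*(x)$. I expect the cleanest route is to use the identity $A(x)=f'(x)^2-(8x+4a)f(x)$: differentiating $f^*(x)=\alpha^3 f(\alpha^{-1}(x-\beta))$ gives $(f^*)'(x)=\alpha^2 f'(\alpha^{-1}(x-\beta))$, so $(f^*)'(x)^2=\alpha^4 f'(\alpha^{-1}(x-\beta))^2$, and the lower-order term $(8x+4a^*)f^*(x)$ combines with this to reproduce precisely $\alpha^4 A(\alpha^{-1}(x-\beta))+\alpha^{3}\beta B(\alpha^{-1}(x-\beta))$ after using $a^*=\alpha a+$ (a multiple of $\beta$) from the coefficient expansion. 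This reduces the whole proposition to a short polynomial identity that can be verified by comparing coefficients, with no appeal to the theory of elliptic curves needed beyond the defining diagram.
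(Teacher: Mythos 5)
Your proposal is correct; the paper simply declares this calculation elementary and omits it, and your second (purely algebraic) argument is exactly that omitted computation: with $u=\alpha^{-1}(x-\beta)$ one has $a^*=\alpha a-3\beta$, hence $8x+4a^*=\alpha(8u+4a)-4\beta$, and the identity $A=f'^2-(8x+4a)f$ gives $A^*(x)=\alpha^4A(u)+\alpha^3\beta\,(4f(u))$, matching $(\ref{ABSigma})$ term for term. Your first argument via the diagram $(\ref{LattesMap})$ is a valid conceptual alternative (the $\alpha^{3/2}$ in the $y$-coordinate is indeed harmless since the identity of rational maps can be checked over $\Kbar$), but the coefficient computation is the route the paper intends.
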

\begin{proof}
We omit this calculation, which is elementary.
\end{proof}

\begin{prop}\label{LattesIsomProp}
Let $E/K$ and $E^*/K$ be elliptic curves given by Weierstrass equations $y^2=x^3+ax^2+bx+c$ and $y^2=x^3+a^*x^2+b^*x+c^*$ over $K$, respectively, and let $\phi_{a,b,c}, \phi_{a^*, b^*, c^*}\in\Fcal_{4,4}(K)$ be the corresponding Latt\`es maps defined in $\S$\ref{Background}. 
\begin{itemize}
	\item[(a)] If $E$ is isomorphic to $E^*$ over $K$, then $\phi_{a,b,c}$ is isomorphic to $\phi_{a^*, b^*, c^*}$ over $K$.
	\item[(b)] If $\phi_{a,b,c}$ is isomorphic to $\phi_{a^*, b^*, c^*}$ over $K$, then there exists an extension $K'/K$ of degree at most $2$ such that $E$ is isomorphic to $E^*$ over $K'$.
\end{itemize}
\end{prop}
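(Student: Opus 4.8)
The plan is to push everything down to the level of the monic cubics $f(x)=x^3+ax^2+bx+c$ and $f^*(x)=x^3+a^*x^2+b^*x+c^*$, using Proposition~\ref{LattesInvProp} together with the elementary fact that the Latt\`es map $\phi_{a,b,c}$ determines $f$. For this last fact, observe that since $f$ has distinct roots we have $\disc(f)\neq0$, so the numerator and denominator displayed in $(\ref{LattesExample})$ are coprime; hence the denominator of $\phi_{a,b,c}$ written in lowest terms equals $4f(x)$ up to a nonzero scalar, and $f$ is recovered as its unique monic scalar multiple. Thus $f\mapsto\phi_{a,b,c}$ is injective on monic cubics with distinct roots, and this injectivity is the bridge between the dynamics and the curves.

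For part (a), suppose $E\cong E^*$ over $K$. Since both are given by Weierstrass equations of the special shape $y^2=(\text{monic cubic in }x)$ and $\ch K=0$, every such isomorphism is induced by an admissible change of coordinates with no $y$-shearing term, namely $x=u^2x'+r$, $y=u^3y'$ for some $u\in K^\times$ and $r\in K$; substituting into $y^2=f(x)$ and comparing with $y^2=f^*(x)$ gives $f^*(x)=u^{-6}f(u^2x+r)$. Now put $\alpha=u^{-2}$ and $\beta=-ru^{-2}$, so that $\sigma(x)=\alpha x+\beta\in\Aut^\infty(\PP^1_K)$; a direct check shows $\alpha^3f(\alpha^{-1}(x-\beta))=f^*(x)$, and Proposition~\ref{LattesInvProp} then yields $\sigma\circ\phi_{a,b,c}\circ\sigma^{-1}=\phi_{a^*,b^*,c^*}$. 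Hence the two Latt\`es maps are isomorphic over $K$.

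For part (b), suppose conversely that $\sigma\circ\phi_{a,b,c}\circ\sigma^{-1}=\phi_{a^*,b^*,c^*}$ for some $\sigma(x)=\alpha x+\beta$ in $\Aut^\infty(\PP^1_K)$. By Proposition~\ref{LattesInvProp} the left-hand side equals $\phi_{a',b',c'}$ where $f'(x)=\alpha^3f(\alpha^{-1}(x-\beta))$, and the injectivity noted above forces $f'=f^*$, i.e. $f^*(x)=\alpha^3f(\alpha^{-1}(x-\beta))$. Let $K'=K(\sqrt{\alpha})$, an extension of $K$ of degree at most $2$, and fix $w\in K'$ with $w^2=\alpha$. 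Then over $K'$ the substitution $x=\alpha^{-1}(X-\beta)$, $y=w^{-3}Y$ is an admissible change of Weierstrass coordinates carrying $y^2=f(x)$ to $Y^2=f^*(X)$, since $Y^2=w^6f(\alpha^{-1}(X-\beta))=\alpha^3f(\alpha^{-1}(X-\beta))=f^*(X)$; so $E$ and $E^*$ are isomorphic over $K'$.

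I do not expect a serious obstacle here: once Proposition~\ref{LattesInvProp} is in hand the argument is essentially bookkeeping. The points that require care are (i) the fact that an isomorphism between two models of the restricted form $y^2=(\text{monic cubic})$ carries no $y$-shear, which is where $\ch K\neq 2$ enters (automatic since $K$ is a number field), and (ii) the injectivity of $f\mapsto\phi_{a,b,c}$. The quadratic extension in part (b) is genuinely necessary: the conjugating scalar $\alpha$ need not be a square in $K$, and when it is not, $E$ and $E^*$ are quadratic twists which become isomorphic only after adjoining $\sqrt{\alpha}$; thus the bound ``degree at most $2$'' is sharp and cannot in general be replaced by ``isomorphic over $K$''.
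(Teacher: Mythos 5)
Your proof is correct and follows essentially the same route as the paper's: both parts reduce to the transformation law for the cubic via Proposition~\ref{LattesInvProp}, with the classification of isomorphisms between Weierstrass equations of the form $y^2=(\text{monic cubic})$ giving (a) and the adjunction of $\sqrt{\alpha}$ giving (b). Your explicit verification that $\phi_{a,b,c}$ determines $f$ (needed in part (b) to pass from an equality of Latt\`es maps to the equality $f^*(x)=\alpha^3f(\alpha^{-1}(x-\beta))$) is a step the paper leaves implicit, and is a worthwhile addition.
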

\begin{proof}
(a) An isomorphism $E\to E^*$ over $K$ must take the form $(x,y)\mapsto(\alpha^2x+\beta,\alpha^3y)$ for $\alpha\in K^\times, \beta\in K$; see \cite{silverman:aec} $\S$III.1.  Writing $X=\alpha^2x+\beta$ and $Y=\alpha^3y$, and letting $f^*(X)=X^3+a^*X^2+b^*X+c^*$, it follows that $f^*(X)=\alpha^6f(\alpha^{-2}(X-\beta))$.  Proposition~\ref{LattesInvProp} then implies that $\sigma\circ\phi_{a,b,c}\circ\sigma^{-1}=\phi_{a^*,b^*,c^*}$, where $\sigma(x)=\alpha^2x+\beta$.

(b)  If $\phi_{a,b,c}$ is isomorphic to $\phi_{a^*, b^*, c^*}$ over $K$, then $\sigma\circ\phi_{a,b,c}\circ\sigma^{-1}=\phi_{a^*,b^*,c^*}$ for some $\sigma\in\Aut^\infty(\PP_K^1)$ given by $\sigma(x)=\alpha x+\beta$, where $\alpha\in K^\times$ and $\beta\in K$.  Let $\alpha_0=\sqrt{\alpha}$ and let $K'=K(\alpha_0)$.  The map $(x,y)\mapsto(\alpha_0^2x+\beta,\alpha_0^3y)$ defines an isomorphism $E\to E^*$ over $K'$.
\end{proof}

\begin{thm}\label{ImpliesSzpiro}
Conjecture~\ref{CondDiscConjecture} for the family $\Fcal_{4,4}(K)$ implies Szpiro's conjecture for semistable elliptic curves. 
\end{thm}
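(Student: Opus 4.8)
The plan is to exploit the inclusion $\Lcal(K)\subseteq\Fcal_{4,4}(K)$ together with the explicit identity $(\ref{CritDiscLattes})$, $\Delta_{A,B}=-2^{38}\disc(f)^5$, in order to translate the conjectural bound on the minimal critical discriminant of a Latt\`es map into a bound on the minimal discriminant of the corresponding elliptic curve. Given a semistable elliptic curve $E/K$, I would fix a Weierstrass equation $y^2=f(x)$ with $f(x)=x^3+ax^2+bx+c\in K[x]$ monic (every elliptic curve over a number field admits such a model, by completing the square and rescaling), and set $\phi=\phi_{a,b,c}\in\Lcal(K)\subseteq\Fcal_{4,4}(K)$, the associated Latt\`es map. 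Since $4\in\Ocal_v$ for every $v\in M_K^0$, the set $S_\lambda$ of $\S$\ref{MinCritDisc} equals $M_K^\infty$, so $\Delta(\phi)=\prod_{v\in M_K^0}\pfrak_v^{\delta_v(\phi)}$. To compare $\Delta(\phi)$ with $\Delta(E)$ I would first describe the pairs $(A,B)$ that enter the definition of $\delta_v(\phi)$: by Proposition~\ref{LattesInvProp} every $\Aut^\infty(\PP^1_K)$-conjugate of $\phi$ is again a Latt\`es map $\phi_{a^*,b^*,c^*}$, and since the standard-form representation of a rational map is unique, every standard pair isomorphic to $\phi$ is the numerator--denominator pair of the Latt\`es map attached to a monic cubic $f^*(x)=\alpha^3 f(\alpha^{-1}(x-\beta))$ for some $\alpha\in K^\times$, $\beta\in K$. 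The curve $y^2=f^*(x)$ is then the quadratic twist $E^{(\alpha)}$ of $E$, one has $\disc(f^*)=\alpha^6\disc(f)$, and $(\ref{CritDiscLattes})$ gives $\ord_v(\Delta_{A^*,B^*})=38\,\ord_v(2)+5\,\ord_v(\disc(f^*))$ for the corresponding pair $(A^*,B^*)$.

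The key estimate is that for every $v\in M_K^0$,
\begin{equation*}
\delta_v(\phi)\geq 5\,\delta_v(E)-c_v ,
\end{equation*}
where $c_v=0$ for $v\nmid 2$ and $c_v$ is a bounded multiple of $\ord_v(2)$ otherwise (so $c_v=0$ for all but finitely many $v$). For $v\nmid 2$ the pair $(A^*,B^*)$ is $v$-integral exactly when $y^2=f^*(x)$ is a $v$-integral Weierstrass model of $E^{(\alpha)}$, hence $\ord_v(\disc(f^*))\geq\delta_v(E^{(\alpha)})$; moreover a $v$-minimal model of $E$ can be put in monic cubic form while preserving minimality (completing the square is admissible over $\Ocal_v$ when $v\nmid 2$), so $\delta_v(\phi)=5\cdot\mathrm{min}_{f^*}\ord_v(\disc(f^*))$. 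Semistability enters here in an essential way: for a curve with good or multiplicative reduction at $v$ one has $\delta_v(E^{(\alpha)})\geq\delta_v(E)$ for every quadratic twist, since from $\ord_v(j)=3m_v-\delta_v$ (with $m_v$ the least value of $\ord_v(c_4)$ over $v$-integral models) and the fact that additive reduction forces $m_v\geq 1$, the minimum over twists is attained at $E$ itself; thus $\delta_v(\phi)=5\,\delta_v(E)$ when $v\nmid 2$. For $v\mid 2$ the $v$-integrality of $(A^*,B^*)$ only bounds $\ord_v$ of the coefficients of $f^*$ from below by a multiple of $\ord_v(2)$; rescaling $f^*$ to a $v$-integral model of $E^{(\alpha)}$ raises $\ord_v(\disc(f^*))$ by at most a bounded multiple of $\ord_v(2)$, and when this is combined with the $38\,\ord_v(2)$ already present one still obtains $\delta_v(\phi)\geq 5\,\delta_v(E)-c_v$.

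Summing the key estimate over $v$ yields $\NN_{K/\QQ}(\Delta(\phi))\geq C_1^{-1}\,\NN_{K/\QQ}(\Delta(E))^5$ for a constant $C_1=C_1(K)$, while the equality $\delta_v(\phi)=5\,\delta_v(E)$ for $v\nmid 2$ forces $\Rfrak(\Delta(\phi))$ to divide $\Rfrak(\Delta(E))\cdot\prod_{v\mid 2}\pfrak_v$, so $\NN_{K/\QQ}(\Rfrak(\Delta(\phi)))\leq C_2\,\NN_{K/\QQ}(\Rfrak(\Delta(E)))$ with $C_2=C_2(K)$. Given $\epsilon>0$, I would then apply Conjecture~\ref{CondDiscConjecture} for the family $\Fcal_{4,4}(K)$ with parameter $\epsilon'=5\epsilon$, whose exponent is $(2\cdot4-2)(2\cdot4-3)+\epsilon'=30+5\epsilon$, to get
\begin{equation*}
\NN_{K/\QQ}(\Delta(E))^5\leq C_1\,\NN_{K/\QQ}(\Delta(\phi))\ll_{K,\epsilon}\bigl(C_2\,\NN_{K/\QQ}(\Rfrak(\Delta(E)))\bigr)^{30+5\epsilon}\ll_{K,\epsilon}\NN_{K/\QQ}(\Rfrak(\Delta(E)))^{30+5\epsilon},
\end{equation*}
and therefore $\NN_{K/\QQ}(\Delta(E))\ll_{K,\epsilon}\NN_{K/\QQ}(\Rfrak(\Delta(E)))^{6+\epsilon}$. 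Since $E/K$ was an arbitrary semistable elliptic curve, this is exactly $(\ref{SzpiroIneq})$, so Szpiro's conjecture for semistable elliptic curves follows.

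I expect the main obstacle to be the bookkeeping at the places above $2$: there the hypothesis that $(A,B)$ has coefficients in $\Ocal_v$ does not translate directly into the statement that $y^2=f^*(x)$ is a $v$-integral Weierstrass equation, so one must verify that the resulting discrepancy is absorbed by the factor $2^{38}$ in $(\ref{CritDiscLattes})$ up to a constant depending only on $K$. A secondary delicate point is the twist analysis: it is precisely the semistability of $E$ that guarantees $\mathrm{min}_\alpha\,\delta_v(E^{(\alpha)})=\delta_v(E)$ at every place, which is the analogue in this setting of the reason Szpiro's conjecture for non-semistable curves must be stated with the conductor in place of the radical of the minimal discriminant.
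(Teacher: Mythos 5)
Your proposal is correct and follows essentially the same route as the paper: pass to the Latt\`es map $\phi_{a,b,c}\in\Fcal_{4,4}(K)$, use the identity $\Delta_{A,B}=-2^{38}\disc(f)^5$ to prove the two comparisons $\NN_{K/\QQ}(\Delta(E))^5\ll\NN_{K/\QQ}(\Delta(\phi))$ and $\NN_{K/\QQ}(\Rfrak(\Delta(\phi)))\ll\NN_{K/\QQ}(\Rfrak(\Delta(E)))$, with semistability entering through the $j$-invariant to show the minimal discriminant cannot drop under quadratic twist, and then apply the conjecture with exponent $30=5\cdot 6$. Your bookkeeping at places over $2$ is looser than the paper's exact local inequality $5\delta_v(E)\leq\ord_v(2^{-18})+\delta_v(\phi)$, but the bounded-multiple-of-$\ord_v(2)$ error you allow is absorbed into the $K$-dependent constant, so nothing is lost.
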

\begin{proof}
Let $E/K$ be a semistable elliptic curve given by a Weierstrass equation $y^2=x^3+ax^2+bx+c$ with discriminant $\Delta_E$, and let $\phi_{a,b,c}\in\Fcal_{4,4}(K)$ be the corresponding Latt\`es map defined in $\S$\ref{Background}.  Then $\phi(x)=A(x)/B(x)$ for polynomials $A(x)=x^4-2bx^2-8cx+b^2-4ac$ and $B(x)=4x^3+4ax^2+4bx+4c$, and 
\begin{equation}\label{TwoDiscriminants}
\Delta_{A,B} = -2^{18}\Delta_E^5,
\end{equation}
which follows from $(\ref{EllipticDisc})$ and $(\ref{CritDiscLattes})$.

We will show that
\begin{equation}\label{TwoIneq}
\begin{split}
\NN_{K/\QQ}(\Delta(E))^{5} & \ll \NN_{K/\QQ}(\Delta(\phi_{a,b,c})) \\ 
\NN_{K/\QQ}(\Rfrak(\Delta(\phi_{a,b,c}))) & \ll \NN_{K/\QQ}(\Rfrak(\Delta(E)))
\end{split}
\end{equation}
(with implied constants depending only on $K$).   When $d=4$, we have $(2d-2)(2d-3)=30$, and so together the two inequalities $(\ref{TwoIneq})$ show that $(\ref{ConjIneq})$ implies $(\ref{SzpiroIneq})$.

To prove the second inequality in $(\ref{TwoIneq})$, consider a place $v\in M_K^0$ of residue characteristic not equal to $2$ or $3$.  If $E/K$ has good reduction at $v$, then $E$ is isomorphic over $K$ to an elliptic curve $E^*/K$ given by a $v$-integral Weierstrass equation $y^2=x^3+a^*x^2+b^*x+c^*$ with discriminant $\Delta_{E^*}\in\Ocal_v^\times$.  According to Proposition~\ref{LattesIsomProp} (a), $\phi_{a,b,c}$ is isomorphic to $\phi_{a^*,b^*,c^*}$, and using $(\ref{TwoDiscriminants})$ and Proposition~\ref{GoodReductionProp} (a) we conclude that $\phi_{a,b,c}$ has critically separable good reduction at $v$.  We have shown that the squarefree integral ideal $\Rfrak(\Delta(\phi_{a,b,c}))$ is divisible only by primes $\pfrak_v$ lying over $2$ or $3$ or for which $\pfrak_v\mid \Rfrak(\Delta(E))$.  It follows that $\NN_{K/\QQ}(\Rfrak(\Delta(\phi_{a,b,c}))) \ll \NN_{K/\QQ}(\Rfrak(\Delta(E)))$.

To prove the first inequality in $(\ref{TwoIneq})$, we will show that 
\begin{equation}\label{ImpliesSzpiroLocalIneq}
5\delta_v(\Delta_{E})\leq \ord_v(2^{-18})+\delta_v(\phi_{a,b,c})
\end{equation}
for all places $v\in M_K^0$.  Assembling the local inequalities $(\ref{ImpliesSzpiroLocalIneq})$ into a global inequality we obtain the first inequality in $(\ref{TwoIneq})$.

It remains only to prove $(\ref{ImpliesSzpiroLocalIneq})$.  Fix a place $v\in M_K^0$.  If $E/K$ has good reduction at $v$ then $\delta_v(\Delta_{E})=0$, and so $(\ref{ImpliesSzpiroLocalIneq})$ holds trivially.  By the semistable assumption it now suffices to consider the case that $E/K$ has multiplicative reduction at $v$.  This means that $E$ is isomorphic over $K$ to an elliptic curve $E_\min/K$ given by a $v$-integral Weierstrass equation
\begin{equation}\label{MinimalWeierstrass}
y^2 +a_1xy +a_3y = x^3 + a_2x^2 + a_4x + a_6
\end{equation}
for which $c_4$ is a $v$-adic unit.  Here $c_4$ is a standard expression in the coefficients $a_j$, and it is related to the $j$-invariant associated to this isomorphism class of elliptic curves by $j=c_4^3/\Delta_{E_\min}$; see \cite{silverman:aec} $\S$III.1 for the precise definition.  Since the $j$-invariant is an isomorphism invariant and $\ord_v(c_4)=0$, it follows that $(\ref{MinimalWeierstrass})$ is in fact a minimal Weierstrass equation for $E$.  Thus $\delta_v(E)=\ord_v(\Delta_{E_\min})$.  

Now let $\phi_{a^*, b^*, c^*}\in\Fcal_{4,4}(K)$ be a Latt\`es map which is isomorphic to $\phi_{a, b, c}$ and given by $\phi_{a^*, b^*, c^*}(x)=A^*(x)/B^*(x)$ for a pair $(A^*(x),B^*(x))$ of polynomials in standard form, such that $a^*,b^*,c^*\in\Ocal_v$, and such that $\ord_v(\Delta_{A^*,B^*})$ is minimal among all such rational maps in $\Fcal_{4,4}(K)$.  Thus $\delta_v(\phi_{a,b,c})=\ord_v(\Delta_{A^*,B^*})$.  

Denote by $E^*/K$ the elliptic curve given by the Weierstrass equation 
\begin{equation}\label{LattesMinimalWeierstrass}
y^2=x^3+a^*x^2+b^*x+c^*.
\end{equation}
It follows from Proposition~\ref{LattesIsomProp} that the elliptic curves $E_\min$ and $E^*$ are isomorphic over $\Kbar$.  In particular, both curves have the same $j$-invariant, which implies that $c_4^3/\Delta_{E_\min}=(c_4^*)^3/\Delta_{E^*}$, where $c_4^*$ denotes the usual expression associated to the Weierstrass equation $(\ref{LattesMinimalWeierstrass})$. Rearranging we have $\Delta_{E^*}=(c_4^*)^3c_4^{-3}\Delta_{E_\min}$, and therefore $\ord_v(\Delta_{E^*})\geq\ord_v(\Delta_{E_\min})$, since $c_4$ is a $v$-adic unit and $c_4^*$ is $v$-integral.  Finally, using the identity $(\ref{TwoDiscriminants})$ we have
\begin{equation*}
5\ord_v(\Delta_{E_\min})\leq 5\ord_v(\Delta_{E^*})=\ord_v(2^{-18})+\ord_v(\Delta_{A^*,B^*}),
\end{equation*}
which implies $(\ref{ImpliesSzpiroLocalIneq})$, because $\delta_v(E)=\ord_v(\Delta_{E_\min})$ and $\delta_v(\phi_{a,b,c})=\ord_v(\Delta_{A^*,B^*})$.
\end{proof}

\begin{rem}  As the reader may have observed, Conjecture~\ref{CondDiscConjecture} is stated only for $d\geq3$.  In fact, the statement of the conjecture holds when $d=2$, but for a somewhat trivial reason following from a purely local argument.  Each isomorphism class in $\Fcal_{2,\lambda}(K)$ contains a rational map of the form $\phi(x)=A(x)/B(x)$ for polynomials $A(x)=x^2+a$ and $B(x)=\lambda x$, where $a\neq0$.  Given a place $v\in M_K\setminus S_\lambda$, let $\pi_v\in K$ be a uniformizer at $v$, and let $m$ be the (unique) integer such that $0\leq\ord_v(\pi_v^{2m}a)\leq1$.  Letting $\sigma(x)=\pi_v^mx$, we have $\sigma\circ\phi\circ\sigma^{-1}(x)=A^\sigma(x)/B^\sigma(x)$ for $v$-integral polynomials $A^\sigma(x)=x^2+\pi_v^{2m}a$ and $B^\sigma(x)=\lambda x$, and the critical discriminant is given by $\Delta_{A^\sigma,B^\sigma}=4\lambda^2\pi_v^{2m}a$.  We conclude that $\delta_v(\phi) \leq \ord_v(\Delta_{A^\sigma,B^\sigma})\leq \ord_v(4\lambda^2)+1$.  Since $\delta_v(\phi)=0$ as all places of critically separable good reduction, we conclude that $\NN_{K/\QQ}(\Delta(\phi)) \ll \NN_{K/\QQ}(\Rfrak(\Delta(\phi)))$.  
\end{rem}

%%%%%%%%%%%%%%%%%%%%%%%%%%%%%
%%%%%%%%%%%%%%%%%%%%%%%%%%%%%
%%%%%%%%%%%%%%%%%%%%%%%%%%%%%
%%%%%%%%%%%%%%%%%%%%%%%%%%%%%
%%%%%%%%%%%%%%%%%%%%%%%%%%%%%

\medskip

\end{document}